\def\mc{\multicolumn}
\newcommand{\cupdot}{\mathbin{\mathaccent\cdot\cup}}
\newcommand{\fns}{\footnotesize}
\renewcommand{\emph}{\textbf}
\newcommand{\Prop}{\mathsf{Prop}}
\newcommand{\marginnote}[1]{\marginpar{\raggedright\tiny{#1}}} 
\newcommand{\mpcolor}{\color{red}}
\newcommand{\mpprefix}{MP: }
\newcommand{\mpnote}[1]{{\mpcolor \mpprefix #1 }}
\newcommand{\mpmnote}[1]{\marginnote{\mpnote{#1}}}
\newcommand{\val}[1]{[\![{#1}]\!]}
\newcommand{\descr}[1]{(\![{#1}]\!)}
\renewcommand{\phi}{\varphi}
\newcommand{\wbox}{\ensuremath{\Box}\xspace}
\newcommand{\wdia}{\ensuremath{\Diamond}\xspace}
\newcommand{\bbox}{\ensuremath{\blacksquare}\xspace}
\newcommand{\bdia}{\ensuremath{\Diamondblack}\xspace}
\newcommand{\aor}{\ensuremath{\vee}\xspace}
\newcommand{\vd}{\ \,\textcolor{black}{\vdash}\ \,}
\newcommand{\Rarr}{\Rightarrow}
\newcommand{\RWD}{R_\wdia}
\newcommand{\RBB}{R_\bbox}
\newcommand{\Swdia}{S_{{\!\wdia}}}
\newcommand{\Swbox}{S_{{\!\wbox}}}
\newcommand{\Sbdia}{S_{{\!\bdia}}}
\newcommand{\kentsbox}{\Swbox}
\newcommand{\kentsdiamond}{\Sbdia}
\newcommand{\kbox}{\Box} 
\newcommand{\kdia}{\Diamondblack}
\newcommand{\rulespace}{3.4mm}
\title{Labelled calculi for the logics of rough concepts}
\author{
Ineke van der Berg\inst{1,3}\orcidID{0000-0003-2220-1383}
\and
Andrea De Domenico\inst{1}\orcidID{0000-0002-8973-7011} 
\and
Giuseppe Greco\inst{1}\orcidID{0000-0002-4845-3821} 
\and
Krishna B.~Manoorkar\inst{1}\orcidID{0000-0003-3664-7757}
\and
Alessandra Palmigiano\inst{1,2}\orcidID{0000-0001-9656-7527}
\and
Mattia Panettiere\inst{1}\orcidID{0000-0002-9218-5449}
}
\authorrunning{van der Berg, De Domenico, Greco, Manoorkar, Palmigiano, Panettiere}
\institute{School of Business and Economics, Vrije Universiteit Amsterdam, the Netherlands 
\and
Department of Mathematics and Applied Mathematics, U.~of Johannesburg, South Africa
\and
Department of Mathematical Sciences, Stellenbosch University
\\
\email{\{i.van.der.berg,a.de.domenico,g.greco,k.b.manoorkar,\\a.palmigiano,m.panettiere\}@vu.nl}
}
\date{September 2022}
\begin{document}

\maketitle
\begin{abstract}
 We introduce sound and complete labelled sequent  calculi for the basic normal non-distributive modal logic $\mathbf{L}$ and some of its axiomatic extensions, where the labels  are atomic formulas of the first order language of {\em enriched formal contexts}, i.e., relational structures based on formal contexts which provide complete semantics for these logics. We also extend these calculi to provide a proof system for the logic of {\em rough formal contexts}.
\keywords{Rough formal contexts \and Non-distributive modal logic \and Labelled calculi \and Proof calculi.}
\end{abstract}

\setlength{\abovedisplayskip}{1.7mm}
\setlength{\belowdisplayskip}{1.7mm}
\setlength{\abovedisplayshortskip}{1.7mm}
\setlength{\belowdisplayshortskip}{1.7mm}

\section{Introduction}
In  structural proof theory, powerful solutions  to the problem of introducing analytic calculi for large classes of normal modal logics hinge on incorporating information about the relational semantics of the given logics into the calculi. This  strategy is prominently used in the  design of {\em labelled calculi} \cite{Gabbay1996-GABLDS,negri2005proof,negri2011proof}, a proof-theoretic format using which,  analytic calculi have been introduced for  the axiomatic extensions of the basic normal modal logic defined by modal axioms corresponding to geometric implications in the first order language of Kripke frames.

 Labelled calculi for classical modal logics manipulate sequents $\Gamma\vdash \Delta$ such that  $\Gamma$ and $\Delta$ are multisets of  atomic formulas $xRy$ in the first order language of Kripke frames and labelled formulas $x : A$  interpreted on Kripke frames as $x\Vdash A$, i.e.~as the condition that the modal formula $A$ be satisfied (or forced) at the state $x$ of a given Kripke frame. The labelled calculus $\mathbf{G3K}$ for the basic normal modal logic $\mathbf{K}$ is obtained by expanding the propositional fragment of the Gentzen calculus $\mathbf{G3c}$ with introduction rules for the modal operators   obtained by reading off  the interpretation clauses of $\wbox$- and $\wdia$-formulas on Kripke frames. 
 Labelled calculi for axiomatic extensions of $\mathbf{K}$ defined by Sahlqvist axioms (including the  modal logics T, K4, KB, S4, B, S5) are obtained in \cite{negri2005proof} by augmenting $\mathbf{G3K}$ with  the rules generated by reading off  the first order conditions on Kripke frames corresponding to the given axioms. 
 
 In the present paper, we extend the design principles for the generation of labelled calculi  to  {\em normal   non-distributive modal logics},  a class of normal LE-logics (cf.~\cite{CoPa:non-dist}) the propositional fragment of which coincides with the logic of lattices in which the distributive laws are not necessarily valid.  In \cite{conradie2016categories,conradie2017toward}, non distributive modal logics are used as the underlying environment for an epistemic logic of categories and formal concepts, and in \cite{conradie2021rough} as the logical environment of a  theory unifying Formal Concept Analysis \cite{ganter2012formal} and Rough Set Theory \cite{pawlak1982rough}. 
 
 Specifically, making use of the fact that the basic normal non-distributive modal logic is sound and complete w.r.t.~{\em enriched formal contexts} (i.e., relational structures based on formal contexts from FCA) \cite{conradie2016categories,conradie2017toward},  and that  modal axioms of a certain syntactic shape \cite{CoPa:non-dist} define elementary (i.e.~first order definable) subclasses of enriched formal contexts, we introduce relational labelled calculi for the basic non-distributive modal logic and some of its axiomatic extensions. 
 
 Moreover, we adapt and specialize these calculi for capturing the logic of relational structures of a related type, referred to as {\em rough formal contexts}, which were introduced by Kent in \cite{kent1994rough} as a formal environment for unifying Formal Concept Analysis and Rough Set Theory. In \cite{greco2019logics}, a sound and complete axiomatization for the non-distributive modal logic of rough formal contexts was introduced by circumventing a technical difficulty which in the present paper is shown to be an impossibility, since two of the three first order conditions characterizing rough formal contexts turn out to be {\em not modally definable} in the modal signature which the general theory would associate with them (cf.~Lemma \ref{lemma:reflnotmodallydef}). However, in the richer language of labelled calculi, these first order conditions can still be used to define structural rules  which capture the axiomatization  introduced in \cite{greco2019logics} for the logic of rough formal contexts.
 
 \paragraph{Structure of the paper.} Section \ref{sec:preliminaries} recalls preliminaries on the logic of enriched and rough formal contexts, Section \ref{sec:relcalculus} presents a labelled calculus for the logic of enriched formal contexts and its extensions. Section \ref{sec:kentstory} proves soundness and completeness results for the calculus for the logic of rough formal contexts. We conclude in Section \ref{sec:Conclusions}.
 
\section{Preliminaries}
\label{sec:preliminaries}
In the present section, we recall the definition and relational semantics of the basic normal non-distributive modal logic in the modal signature $\{\wbox, \wdia, {\rhd}\}$ and some of its axiomatic extensions. This logic and similar others have been studied in the context of a research program aimed at introducing the logical foundations of categorization theory \cite{conradie2016categories,conradie2017toward,conradie2021rough}. In this context, $\Box c$ and $\Diamond c$ and ${\rhd} c$ can be given e.g.~the epistemic interpretation of the categories of the objects which are  {\em certainly}, {\em possibly}, and {\em certainly not} members of category $c$, respectively.  Motivated by these ideas, in \cite{conradie2020non}, possible interpretations of (modal) non-distributive logics are systematically discussed also in their connections with their classical interpretation.

\subsection{Basic normal non-distributive modal logic and some of its axiomatic extensions}
\label{ssec:Non-distributive modal logic}

Let $\Prop$ be a (countable or finite) set of atomic propositions. The language $\mathcal{L}$ is defined as follows:
\begin{gather*} 
  \varphi \coloneqq \bot \mid \top \mid p \mid  \varphi \wedge \varphi \mid \varphi \vee \varphi \mid \Box \varphi \mid  \Diamond\varphi\mid {\rhd} \varphi,  
\end{gather*}
where $p\in \Prop$. 
The {\em basic}, or {\em minimal normal} $\mathcal{L}$-{\em logic} is a set $\mathbf{L}$ of sequents $\phi\vdash\psi$,  with $\phi,\psi\in\mathcal{L}$, containing the following axioms:

{{\centering
\begin{tabular}{ccccccccccccc}
     $p \vdash p$ & \quad\quad & $\bot \vdash p$ & \quad\quad & $p \vdash p \vee q$ & \quad\quad & $p \wedge q \vdash p$ & \quad\quad & $\top \vdash \Box\top$ & \quad\quad & $\Box p \wedge \Box q \vdash \Box(p \wedge q)$
     \\
     & \quad & $p \vdash \top$ & \quad & $q \vdash p \vee q$ & \quad & $p \wedge q \vdash q$ &\quad &  $\Diamond\bot \vdash \bot$ & \quad & $\Diamond(p \vee q) \vdash \Diamond p \vee \Diamond q$\\
     & \quad & & \quad &  & \quad &  &\quad &  $\top\vdash {\rhd}\bot$ & \quad & ${\rhd}p \wedge {\rhd}q\vd {\rhd}(p\aor q)$
\end{tabular}
\par}}
\noindent and closed under the following inference rules:
		{\small{
		\begin{gather*}
			\frac{\phi\vdash \chi\quad \chi\vdash \psi}{\phi\vdash \psi}
			\ \ 
			\frac{\phi\vdash \psi}{\phi\left(\chi/p\right)\vdash\psi\left(\chi/p\right)}
			\ \ 
			\frac{\chi\vdash\phi\quad \chi\vdash\psi}{\chi\vdash \phi\wedge\psi}
			\ \ 
			\frac{\phi\vdash\chi\quad \psi\vdash\chi}{\phi\vee\psi\vdash\chi}
\ \ 
			\frac{\phi\vdash\psi}{\Box \phi\vdash \Box \psi}
\ \ 
\frac{\phi\vdash\psi}{\Diamond \phi\vdash \Diamond \psi}
\ \  
\frac{\phi\vdash\psi}{\rhd \psi\vdash \rhd \phi}
		\end{gather*}
		}}
An {\em $\mathcal{L}$-logic} is any extension of $\mathbf{L}$  with $\mathcal{L}$-axioms $\phi\vdash\psi$. In what follows, for any set $\Sigma$ of $\mathcal{L}$-axioms, we let $\mathbf{L}.\Sigma$ denote the axiomatic extension of $\mathbf{L}$ generated by $\Sigma$. Throughout the paper, we will consider all subsets $\Sigma$ of the set of axioms listed in the table below. Some of these axioms are well known from classical  modal logic, and have also cropped up in  \cite{conradie2021rough} in the context of the definition of  relational structures  simultaneously generalizing Formal Concept Analysis and Rough Set Theory. In Proposition \ref{lemma:correspondences}, we list their first-order correspondents w.r.t.~the relational semantics discussed in the next section.

{\small
\begin{center}
\begin{tabular}{rccclclcccl}
\hline
  && $\wdia \wdia A \vdash \wdia A$ && $\wbox A \vd \wbox \wbox A$ &$\quad\quad$&  && $A \vdash \wbox \wdia A$ && $\wdia \wbox A \vd A$ \\
  && $\wbox A \vdash A$ && $A \vd \wdia A$ &&   && $ A \vd {\rhd} {\rhd} A$ &&  \\
\hline
\end{tabular}
\end{center}
 }

\subsection{Relational semantics of $\mathcal{L}$-logics}
\label{ssec:relsem}
The present subsection collects notation,  notions and facts from \cite{conradie2021rough,conradie2020non}.
For any binary relation $T\subseteq U\times V$, and any $U'\subseteq U$  and $V'\subseteq V$, we let $T^c$ denote the set-theoretic complement of $T$ in $U\times V$, and
\begin{equation}\label{eq:def;round brackets}T^{(1)}[U']:=\{v\mid \forall u(u\in U'\Rightarrow uTv) \}  \quad\quad T^{(0)}[V']:=\{u\mid \forall v(v\in V'\Rightarrow uTv) \}.\end{equation}
Well known properties of this construction (cf.~\cite[Sections 7.22-7.29]{davey2002introduction}) are stated in the following lemma.
 \begin{lemma}\label{lemma: basic}
 For any sets $U, V$, $U'$ and $V'$, and for any families of sets $\mathcal{V}$ and $\mathcal{U}$,
~\begin{enumerate}
\item $X_1\subseteq X_2\subseteq U$ implies $T^{(1)}[X_2]\subseteq T^{(1)}[X_1]$, and $Y_1\subseteq Y_2\subseteq V$ implies $T^{(0)}[Y_2]\subseteq T^{(0)}[Y_1]$.
\item $U'\subseteq T^{(0)}[V']$ iff  $V'\subseteq T^{(1)}[U']$.
 \item $U'\subseteq T^{(0)}[T^{(1)}[U']]$ and $V'\subseteq T^{(1)}[T^{(0)}[V']]$.
 \item $T^{(1)}[U'] = T^{(1)}[T^{(0)}[T^{(1)}[U']]]$ and $T^{(0)}[V'] = T^{(0)}[T^{(1)}[T^{(0)}[V']]]$.
 \item $T^{(0)}[\bigcup\mathcal{V}] = \bigcap_{V'\in \mathcal{V}}T^{(0)}[V']$ and $T^{(1)}[\bigcup\mathcal{U}] = \bigcap_{U'\in \mathcal{U}}T^{(1)}[U']$.
\end{enumerate}
 \end{lemma}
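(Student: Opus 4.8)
The plan is to prove items~(1) and~(2) directly from the definitions of $T^{(1)}[-]$ and $T^{(0)}[-]$, and then to derive (3) and (4) purely formally from (1)--(2) by the standard algebra of (antitone) Galois connections, handling (5) separately by a straightforward manipulation of quantifiers.

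First I would prove~(1). If $X_1 \subseteq X_2 \subseteq U$ and $v \in T^{(1)}[X_2]$, then $uTv$ holds for every $u \in X_2$, hence in particular for every $u \in X_1$, so $v \in T^{(1)}[X_1]$; the statement for $T^{(0)}$ is symmetric, interchanging the roles of $U$ and $V$. Next, for~(2), I would unfold both inclusions: $U' \subseteq T^{(0)}[V']$ says that $uTv$ holds whenever $u \in U'$ and $v \in V'$, i.e.\ $\forall u\, \forall v\,(u \in U' \wedge v \in V' \Rightarrow uTv)$, while $V' \subseteq T^{(1)}[U']$ says the same with the two universal quantifiers exchanged; since reordering universal quantifiers is harmless, the two inclusions are equivalent. (This is precisely the statement that $(T^{(0)}[-], T^{(1)}[-])$ is a Galois connection.)

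Then~(3) is an instantiation of~(2): taking $V' := T^{(1)}[U']$ makes the inclusion $V' \subseteq T^{(1)}[U']$ trivial, so by~(2) we get $U' \subseteq T^{(0)}[T^{(1)}[U']]$; the second half follows the same way, starting from an arbitrary $V'$ and the converse direction of~(2). For~(4), the inclusion $T^{(1)}[U'] \subseteq T^{(1)}[T^{(0)}[T^{(1)}[U']]]$ is just the second half of~(3) applied with $T^{(1)}[U']$ in place of $V'$, while applying the antitone operator $T^{(1)}[-]$ of~(1) to the inclusion $U' \subseteq T^{(0)}[T^{(1)}[U']]$ of~(3) gives $T^{(1)}[T^{(0)}[T^{(1)}[U']]] \subseteq T^{(1)}[U']$; combining the two yields the first equality, and the second is symmetric. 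Finally, for~(5): $u \in T^{(0)}[\bigcup\mathcal{V}]$ unfolds to $\forall v\,\big((\exists V' \in \mathcal{V}\ v \in V') \Rightarrow uTv\big)$, which is logically equivalent to $\forall V' \in \mathcal{V}\,\forall v\,(v \in V' \Rightarrow uTv)$, i.e.\ to $u \in \bigcap_{V' \in \mathcal{V}} T^{(0)}[V']$; the statement for $T^{(1)}[-]$ is symmetric.

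I do not expect any real obstacle: the whole lemma is bookkeeping about the Galois connection induced by $T$. The only points that need a little care are tracking the direction of the inclusions when composing the antitone operators in~(4), and resisting the temptation to reprove (3)--(4) by unfolding definitions from scratch --- deriving them abstractly from (1)--(2) is cleaner and parallels the classical development cited in~\cite[Sections 7.22--7.29]{davey2002introduction}.
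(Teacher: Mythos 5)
Your proof is correct; the paper itself gives no proof of this lemma, simply citing it as a collection of well-known facts about the Galois connection induced by a relation (cf.\ Davey--Priestley, Sections 7.22--7.29), and your argument --- proving (1)--(2) by unfolding the definitions and deriving (3)--(5) by the standard Galois-connection algebra and quantifier manipulation --- is exactly that standard development. No gaps.
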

 If $R\subseteq U \times V$, and $S \subseteq V \times W$, then  the composition  $R;S \subseteq U \times W$ is defined as follows: 
 \[u (R;S) w \quad  \text{iff} \quad  u \in R^{(0)}[S^{(0)}[w]] \quad \text{iff} \quad \forall v(v S w \Rarr u R v). \]
 
 In what follows, we fix two sets $A$ and $X$, and use $a, b$ (resp.~$x, y$) for elements of $A$ (resp.~$X$), and $B, C, A_j$ (resp.~$Y, W, X_j$) for subsets of $A$ (resp.~of $X$).


A {\em polarity} or {\em formal context} (cf.~\cite{ganter2012formal}) is a tuple $\mathbb{P} =(A,X,I)$, where $A$ and $X$ are sets, and $I \subseteq A \times X$ is a binary relation. In what follows, for any such polarity, we will let $J\subseteq X\times A$ be defined by the equivalence  $xJa$ iff $aIx$.
Intuitively, formal contexts can be understood as abstract representations of databases \cite{ganter2012formal}, so that  $A$ represents a collection of {\em objects}, $X$  a collection of {\em features}, and for any object $a$ and feature $x$, the tuple $(a, x)$ belongs to $I$ exactly when object $a$ has feature $x$.

 As is well known, for every formal context $\mathbb{P} = (A, X, I)$, the pair of maps \[(\cdot)^\uparrow: \mathcal{P}(A)\to \mathcal{P}(X)\quad \mbox{ and } \quad(\cdot)^\downarrow: \mathcal{P}(X)\to \mathcal{P}(A),\]
respectively defined by the assignments $B^\uparrow: = I^{(1)}[B]$ and $Y^\downarrow: = I^{(0)}[Y]$,  form a Galois connection (cf.~Lemma \ref{lemma: basic}.2), and hence induce the closure operators $(\cdot)^{\uparrow\downarrow}$ and $(\cdot)^{\downarrow\uparrow}$ on $\mathcal{P}(A)$ and on $\mathcal{P}(X)$ respectively.\footnote{When $B=\{a\}$ (resp.\ $Y=\{x\}$) we write $a^{\uparrow\downarrow}$ for $\{a\}^{\uparrow\downarrow}$ (resp.~$x^{\downarrow\uparrow}$ for $\{x\}^{\downarrow\uparrow}$).} The fixed points of these closure operators are  referred to as {\em Galois-stable} sets. 
For a formal context $\mathbb{P}=(A,I,X)$, a {\em formal concept} of $\mathbb{P}$ is a tuple $c=(B,Y)$ such that $B\subseteq A$ and $Y\subseteq X$, and $B = Y^\downarrow$ and $Y = B^\uparrow$.  The subset $B$ (resp.~$Y$) is referred to as  the {\em extension} (resp.~the {\em intension}) of $c$ and is denoted by $\val{c}$  (resp.~$\descr{c}$). 
By Lemma \ref{lemma: basic}.3, the sets $B$ and $Y$ are  Galois-stable. 
It is well known (cf.~\cite{ganter2012formal}) that the set  of formal concepts of a formal context $\mathbb{P}$,  with the order defined by
\smallskip

{{\centering
$c_1 \leq c_2 \quad \text{iff} \quad \val{c_1} \subseteq \val{c_2} \quad \text{iff} \quad \descr{c_2} \subseteq \descr{c_1}$,
\par}}
\smallskip

\noindent forms a complete lattice, namely  the {\em concept lattice} of $\mathbb{P}$, which we denote by $\mathbb{P}^+$.



For the language $\mathcal{L}$ defined in the previous section, an {\em enriched formal $\mathcal{L}$-context} is a tuple $\mathbb{F} =(\mathbb{P}, R_\wbox, R_\Diamond, R_{\rhd})$, where $R_\wbox \subseteq A \times X$ and $R_\wdia \subseteq X \times A$ and $R_{\rhd}\subseteq A\times A$ are {\em $I$-compatible} relations, that is, for all $a,b \in A$, and all $x \in X$, the sets $R_\wbox^{(0)}[x]$, $R_\wbox^{(1)}[a]$, $R_\wdia^{(0)}[a]$, $R_\wdia^{(1)}[x]$,
$R_\rhd^{(0)}[b]$, $R_\rhd^{(1)}[a]$  are Galois-stable in $\mathbb{P}$. As usual in modal logic, these relations can be interpreted in different ways, for instance as the epistemic attributions of features to objects by agents. 


A {\em valuation} on such an $\mathbb{F}$ 
is a map $V\colon\Prop\to \mathbb{P}^+$. For every  $p\in \Prop$, we let  $\val{p}: = \val{V(p)}$ (resp.~$\descr{p}: = \descr{V(p)}$) denote the extension (resp.~the intension) of the interpretation of $p$ under $V$.  
A {\em model} is a tuple $\mathbb{M} = (\mathbb{F}, V)$ where $\mathbb{F} = (\mathbb{P}, R_{\Box}, R_{\Diamond}, R_{\rhd})$ is an enriched formal context and $V$ is a  valuation on $\mathbb{F}$.  For every $\phi\in \mathcal{L}$, the following `forcing' relations can be recursively defined as follows: 
\smallskip


\smallskip

{{\centering 
\small
\begin{tabular}{l@{\hspace{1em}}l@{\hspace{2em}}l@{\hspace{1em}}l}
$\mathbb{M}, a \Vdash p$ & iff $a\in \val{p}_{\mathbb{M}}$ &
$\mathbb{M}, x \succ p$ & iff $x\in \descr{p}_{\mathbb{M}}$ \\
$\mathbb{M}, a \Vdash\top$ & always &
$\mathbb{M}, x \succ \top$ & iff   $a I x$ for all $a\in A$\\
$\mathbb{M}, x \succ  \bot$ & always &
$\mathbb{M}, a \Vdash \bot $ & iff $a I x$ for all $x\in X$\\
$\mathbb{M}, a \Vdash \phi\wedge \psi$ & iff $\mathbb{M}, a \Vdash \phi$ and $\mathbb{M}, a \Vdash  \psi$ & 
$\mathbb{M}, x \succ \phi\wedge \psi$ & iff $(\forall a\in A)$ $(\mathbb{M}, a \Vdash \phi\wedge \psi \Rightarrow a I x)$
\\
$\mathbb{M}, x \succ \phi\vee \psi$ & iff  $\mathbb{M}, x \succ \phi$ and $\mathbb{M}, x \succ  \psi$ & 
$\mathbb{M}, a \Vdash \phi\vee \psi$ & iff $(\forall x\in X)$ $(\mathbb{M}, x \succ \phi\vee \psi \Rightarrow a I x)$.
\end{tabular}
\par}}
\smallskip

\noindent As to the interpretation of modal formulas:
\smallskip

{{\centering
\small
\begin{tabular}{llcll}
$\mathbb{M}, a \Vdash \Box\phi$ &  iff $(\forall x\in X)(\mathbb{M}, x \succ \phi \Rightarrow a R_\Box x)$ & \quad\quad &
$\mathbb{M}, x \succ \Box\phi$ &  iff $(\forall a\in A)(\mathbb{M}, a \Vdash \Box\phi \Rightarrow a I x)$\\
$\mathbb{M}, x \succ \Diamond\phi$ &  iff for all $ a\in A$, if $\mathbb{M}, a \Vdash \phi$ then $x R_\Diamond a$ &&
$\mathbb{M}, a \Vdash \Diamond\phi$ & iff $(\forall x\in X)(\mathbb{M}, x \succ \Diamond\phi \Rightarrow a I x)$  \\
 $\mathbb{M}, a \Vdash {\rhd}\phi$ &  iff $(\forall b\in A)(\mathbb{M}, b \Vdash \phi \Rightarrow a R_{\rhd} b)$ &  &
 $\mathbb{M}, x \succ {\rhd}\phi$ &  iff $(\forall a\in A)(\mathbb{M}, a \Vdash {\rhd}\phi  \Rightarrow a I x)$.\\
\end{tabular}
\par}}
\smallskip

\noindent The definition above ensures that, for any $\mathcal{L}$-formula $\varphi$,\smallskip

{{\small\centering
$\mathbb{M}, a \Vdash \phi$  iff  $a\in \val{\phi}_{\mathbb{M}}$, \quad  and \quad$\mathbb{M},x \succ \phi$  iff  $x\in \descr{\phi}_{\mathbb{M}}$. \par}}

\smallskip
\noindent Finally, as to the interpretation of sequents:
\smallskip

{{\small\centering
$\mathbb{M}\models \phi\vdash \psi$ \quad iff \quad $\val{\phi}_{\mathbb{M}}\subseteq \val{\psi}_{\mathbb{M}}$\quad  iff  \quad  $\descr{\psi}_{\mathbb{M}}\subseteq \descr{\phi}_{\mathbb{M}}$. 
\par}}
\smallskip

A sequent $\phi\vdash \psi$ is {\em valid} on an enriched formal context $\mathbb{F}$ (in symbols: $\mathbb{F}\models \phi\vdash \psi$) if $\mathbb{M}\models \phi\vdash \psi$  for every model $\mathbb{M}$ based on $\mathbb{F}$. 
The basic non-distributive logic $\mathbf{L}$ is sound and complete w.r.t.~the class of enriched formal contexts (cf.~\cite{conradie2021rough}). 

Then, via a general canonicity result (cf.~\cite{CoPa:non-dist}), the following proposition (cf.~\cite[Proposition 4.3]{conradie2021rough}) implies that, for any subset $\Sigma$ of the set of axioms at the end of Section \ref{ssec:Non-distributive modal logic}, the logic $\mathbf{L}.\Sigma$ is complete w.r.t.~the class of enriched formal contexts defined by those first-order sentences in the statement of the proposition below  corresponding to the axioms in $\Sigma$. 

These first order sentences are compactly represented as inclusions of relations defined as follows. For any enriched formal context $\mathbb{F} = (\mathbb{P}, R_{\Box}, R_{\Diamond}, R_{\rhd} )$, 
the relations $R_{\Diamondblack}\subseteq X\times A$,  $R_\blacksquare\subseteq A\times X$ and $R_{\blacktriangleright}\subseteq A\times A$ are defined by $xR_{\Diamondblack} a$ iff $aR_{\wbox} x$, and $a R_\blacksquare x$ iff $x R_\wdia a$, and $a R_{\blacktriangleright}b$ iff $bR_{\rhd}a$. Moreover, for all relations $R, S\subseteq A\times X$ we let $R; S\subseteq A\times X$ be defined\footnote{These compositions   and those defined in Section \ref{ssec:relsem} are pairwise different, since each of them involves different   types of relations. However, the types of the relations involved in each definition provides a unique reading of such compositions, which justifies our abuse of notation.} by $a(R; S)x$ iff $a\in R^{(0)}[I^{(1)}[S^{(0)}[x]]]$, and for all relations $R, S\subseteq X\times A$ we let $R; S\subseteq X\times A$ be defined by $x(R; S)a$ iff $x\in R^{(0)}[I^{(0)}[S^{(0)}[a]]]$.

\begin{proposition}
\label{lemma:correspondences}
For any enriched formal context $\mathbb{F} = (\mathbb{P}, R_\Box, R_\Diamond, R_{\rhd})$:
\smallskip

{\small
{{\centering 
\begin{tabular}{rlcl c rlcl}
$1$. & $\mathbb{F}\models \Box\phi\vdash \phi$ & iff & $ R_\Box\subseteq I$. & &
$5$. & $\mathbb{F}\models \Diamond\Diamond\phi\vdash \Diamond\phi $ & iff & $ R_{\Diamond}\subseteq R_{\Diamond}\, ; R_{\Diamond}$. \\
$2$. & $\mathbb{F}\models \phi\vdash \Diamond\phi $ & iff & $R_\wdia\subseteq J$. &&
$6$. & $\mathbb{F}\models \phi\vdash \Box\Diamond\phi $ & iff & $\quad R_{\Diamond} \subseteq R_{\Diamondblack}$.\\
$3$. & $\mathbb{F}\models \Box\phi\vdash \Box\Box\phi $ & iff & $ R_{\Box}\subseteq R_{\Box}\, ; R_{\Box}$. & &
$7$.&  $\mathbb{F}\models \wdia\wbox\phi\vdash \phi $ & iff &  $ \quad  R_{\Diamondblack}  \subseteq R_{\Diamond}$. \\
$4$.&  $\mathbb{F}\models \phi\vdash {\rhd}{\rhd}\phi $ & iff & $ R_{\rhd} = R_{\blacktriangleright}$. \\

\end{tabular}
\par}}
 }
\end{proposition}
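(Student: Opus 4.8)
The plan is to prove each of the seven equivalences by a direct correspondence-theoretic argument, uniform across the cases; the result also follows from the general ALBA-based correspondence theory for LE-logics in \cite{CoPa:non-dist}, since each inequality is analytic inductive, but the hands-on proof is short and instructive. First I would fix an enriched formal context $\mathbb{F}$ and, using that the axioms are schematic, rewrite $\mathbb{F}\models\phi\vdash\psi$ as ``$\val{\phi}_{\mathbb{M}}\subseteq\val{\psi}_{\mathbb{M}}$ for every model $\mathbb{M}=(\mathbb{F},V)$'', with $V(p)$ ranging over all formal concepts of $\mathbb{P}$. Then, applying the recursive clauses --- in particular $\val{\Box\phi}=R_\Box^{(0)}[\descr{\phi}]$, $\descr{\Diamond\phi}=R_\Diamond^{(0)}[\val{\phi}]$, $\val{{\rhd}\phi}=R_{\rhd}^{(0)}[\val{\phi}]$ --- together with the clauses for $\wedge,\vee,\top,\bot$ and the Galois adjunction $\val{c}=\descr{c}^{\downarrow}$, $\descr{c}=\val{c}^{\uparrow}$, I would reduce each $\mathbb{F}\models\phi\vdash\psi$ to a purely relational statement quantified over all concepts $c$ (equivalently, over all Galois-stable $\descr{c}\subseteq X$, or over all Galois-stable $\val{c}\subseteq A$); e.g.\ $\mathbb{F}\models\Box\phi\vdash\phi$ becomes ``$R_\Box^{(0)}[Y]\subseteq Y^{\downarrow}$ for every Galois-stable $Y\subseteq X$''.

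Next, for the ($\Leftarrow$) direction I would substitute the stated first-order condition into the relational statement and discharge it using Lemma \ref{lemma: basic} (antitonicity of $(\cdot)^{(0)},(\cdot)^{(1)}$, the Galois identities, and their interaction with unions) together with the $I$-compatibility of the enriching relations; for example $R_\Box\subseteq I$ immediately gives $R_\Box^{(0)}[Y]\subseteq I^{(0)}[Y]=Y^{\downarrow}$. For the ($\Rightarrow$) direction I would instantiate the quantification over concepts at the \emph{single-generated} concepts $(x^{\downarrow},\du{x})$ for $x\in X$ and $(\ud{a},a^{\uparrow})$ for $a\in A$, which are concepts by Lemma \ref{lemma: basic}.4. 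The lemma doing the real work here is that every $I$-compatible relation $R$ satisfies $R^{(0)}[\du{x}]=R^{(0)}[x]$ and $R^{(0)}[\ud{a}]=R^{(0)}[a]$ whenever these are well-typed (and symmetrically for $R^{(1)}$): if $a\in R^{(0)}[x]$ then $x\in R^{(1)}[a]$, which is Galois-stable, so $\du{x}\subseteq R^{(1)}[a]$ and hence $a\in R^{(0)}[\du{x}]$; the converse inclusion is Lemma \ref{lemma: basic}.1 and .3. Substituting a single-generated concept therefore collapses the intermediate closure operators and lays bare a relational inclusion between individual points. Thus for axiom 1 the instance at $(x^{\downarrow},\du{x})$ reads $R_\Box^{(0)}[x]\subseteq x^{\downarrow}=I^{(0)}[x]$, i.e.\ $\forall a(aR_\Box x\Rightarrow aIx)$, and quantifying over $x$ yields $R_\Box\subseteq I$; similarly axiom 2 gives $R_\Diamond\subseteq J$, axiom 6 (instance at $(\ud{a},a^{\uparrow})$) gives $\forall x(xR_\Diamond a\Rightarrow aR_\Box x)$, i.e.\ $R_\Diamond\subseteq R_{\Diamondblack}$, axiom 7 gives $R_{\Diamondblack}\subseteq R_\Diamond$, and axiom 4 reduces to $R_{\rhd}^{(0)}[a]\subseteq R_{\rhd}^{(1)}[a]$ for every $a$, which is precisely symmetry of $R_{\rhd}$, i.e.\ $R_{\rhd}=R_{\blacktriangleright}$.

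The step I expect to be the main obstacle is axioms 3 and 5, i.e.\ the iterated modalities $\Box\Box$ and $\Diamond\Diamond$ and their first-order correspondents phrased via the $I$-mediated compositions $R;S$ defined just above the proposition. After instantiating at a single-generated concept and collapsing the outermost closure, one is left with an expression of the form $R_\Box^{(0)}[(R_\Box^{(0)}[x])^{\uparrow}]$ (resp.\ $R_\Diamond^{(0)}[(R_\Diamond^{(0)}[a])^{\downarrow}]$), and the crux is to check that this set is \emph{exactly} $\{b\mid b(R_\Box;R_\Box)x\}$ (resp.\ $\{y\mid y(R_\Diamond;R_\Diamond)a\}$). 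This is precisely what the definitions of $R;S$ through $R^{(0)}[I^{(1)}[S^{(0)}[\cdot]]]$ and $R^{(0)}[I^{(0)}[S^{(0)}[\cdot]]]$ are engineered to make true, so it comes down to bookkeeping with Lemma \ref{lemma: basic}.4--5 on exactly where the closure operators sit --- but it is the delicate point, the more so because one must keep careful track of the duality between the two forcing relations $\Vdash$ and $\succ$ throughout. Once the identification is in place, quantifying over the generator yields $R_\Box\subseteq R_\Box;R_\Box$ (resp.\ $R_\Diamond\subseteq R_\Diamond;R_\Diamond$), and for the converse these inclusions feed back through the relational statement, via the same Lemma \ref{lemma: basic}-and-compatibility reasoning, to re-establish $\mathbb{F}\models\phi\vdash\psi$ for all concepts. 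Everything outside this matching step is a routine application of Lemma \ref{lemma: basic} and $I$-compatibility.
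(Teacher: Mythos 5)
Your overall strategy is sound, and it is worth noting that the paper itself does not prove this proposition: it is imported by citation from \cite[Proposition~4.3]{conradie2021rough} (together with the general correspondence/canonicity machinery of \cite{CoPa:non-dist}), so your hands-on argument is a self-contained route rather than a retracing of anything in the text. Your two main ingredients are exactly the right ones: reducing validity to an inclusion quantified over Galois-stable extents/intents, and then instantiating at the single-generated concepts $(x^{\downarrow},x^{\downarrow\uparrow})$ and $(a^{\uparrow\downarrow},a^{\uparrow})$, using the fact that $I$-compatibility gives $R^{(0)}[x^{\downarrow\uparrow}]=R^{(0)}[x]$ and $R^{(0)}[a^{\uparrow\downarrow}]=R^{(0)}[a]$; your proof of that auxiliary fact is correct, and it settles the left-to-right directions of all seven items, as well as both directions of 1, 2, 4, 6, 7 (where the stated relational inclusion plainly implies the inclusion for arbitrary concepts).

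Where the sketch is off-balance is items 3 and 5. The step you single out as the crux --- checking that $R_\Box^{(0)}[(R_\Box^{(0)}[x])^{\uparrow}]$ is exactly $\{b\mid b(R_\Box;R_\Box)x\}$ --- is not delicate at all: it is literally the definition of the $I$-mediated composition given just before the proposition, so no bookkeeping is needed. The genuinely non-trivial step is the one you wave through as ``feed back through the relational statement, routine'': in the right-to-left direction, $R_\Box\subseteq R_\Box;R_\Box$ is precisely validity of $\Box\phi\vdash\Box\Box\phi$ at the feature-generated concepts, and one must still lift this to arbitrary concepts. This does not follow by mere substitution, because $R_\Box^{(0)}[Y]\subseteq I^{(0)}[z]$ for $Y$ Galois-stable does not give $R_\Box^{(0)}[x]\subseteq I^{(0)}[z]$ for any single $x\in Y$. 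The lift does go through, but it uses the complete meet-preservation of $\Box$ (dually, join-preservation for $\Diamond$), or in unfolded form: write $R_\Box^{(0)}[Y]=\bigcap_{x\in Y}R_\Box^{(0)}[x]$ (Lemma~\ref{lemma: basic}.5), observe that for $a\in R_\Box^{(0)}[Y]$ the pointwise hypothesis gives $(R_\Box^{(0)}[x])^{\uparrow}\subseteq R_\Box^{(1)}[a]$ for every $x\in Y$, note that $(\bigcap_{x\in Y}R_\Box^{(0)}[x])^{\uparrow}=(\bigcup_{x\in Y}(R_\Box^{(0)}[x])^{\uparrow})^{\downarrow\uparrow}$ for Galois-stable arguments, and conclude by Galois-stability of $R_\Box^{(1)}[a]$ that $(R_\Box^{(0)}[Y])^{\uparrow}\subseteq R_\Box^{(1)}[a]$, i.e.\ $a\in R_\Box^{(0)}[(R_\Box^{(0)}[Y])^{\uparrow}]$. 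All the tools are ones you already invoke ($I$-compatibility and Lemma~\ref{lemma: basic}), so this is a fixable under-specification rather than a wrong turn, but as written the sketch locates the difficulty in a definitional identity and leaves the actual argument implicit.
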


\noindent The proposition above motivated the introduction of the notion of conceptual approximation space in \cite{conradie2021rough}, as a subclass of the enriched formal contexts modelling the ${\rhd}$-free fragment of the language $\mathcal{L}$. 
%
%
%
%
A {\em conceptual approximation space} is an enriched formal context $\mathbb{F} = (\mathbb{P}, R_\Box, R_\Diamond)$ verifying the first order sentence
$R_{\Box};R_{\blacksquare} \subseteq I$.
Such an $\mathbb{F}$ is {\em reflexive} if $R_\Box\subseteq I$ and $R_\wdia\subseteq J$, is {\em symmetric} if $R_{\Diamond}  = R_{\Diamondblack}$ or equivalently if $R_{\blacksquare}  = R_{\Box}$, and  is {\em transitive} if $R_{\Box}\subseteq R_{\Box}\, ; R_{\Box}$ and $R_{\Diamond}\subseteq R_{\Diamond}\, ; R_{\Diamond}$ (cf.~\cite{conradie2021rough,conradie2022modal} for a discussion on terminology).

\subsection{The logic of rough formal contexts}
\label{ssec:kentstructures}

Examples of conceptual approximation spaces have cropped up in the context of  Kent's proposal for a simultaneous generalization of approximation spaces from RST and formal contexts from FCA  \cite{kent1996rough}. Specifically, Kent introduced {\em rough formal contexts} 
as tuples $\mathbb{G} = (\mathbb{P}, E)$ such that $\mathbb{P} = (A, X, I)$ is a polarity, and $E\subseteq A\times A$ is an equivalence relation.   
The relation $E$ induces two  relations $R_\wbox, \kentsbox\subseteq A\times X$  defined as follows: for every $a\in A$ and $x\in X$,
\begin{equation}\label{eq:lax approx}
aR_\wbox x \, \mbox{ iff }\, \exists b(aEb \, \&\, bIx) 
\quad\quad\quad\quad
a\kentsbox x \, \mbox{ iff }\,  \forall b(aEb\Rightarrow bIx) 
\end{equation}
The reflexivity of $E$ implies that $\kentsbox\subseteq I\subseteq R_\wbox$; hence, 
  $R_\wbox$ and $\kentsbox$ can  respectively be regarded as the {\em lax}, or {\em upper}, and as the {\em strict}, or {\em lower}, approximation of $I$ relative to $E$. 
 For any rough formal context $\mathbb{G} = (\mathbb{P}, E)$, let $\kentsdiamond\subseteq X\times A$ be defined by the equivalence $x\kentsdiamond a$ iff $a\kentsbox x$, 
\begin{lemma}
\label{lemma:s_as_i_e}
If $\mathbb{G} = (\mathbb{P}, E)$ is a rough formal context, then $\kentsdiamond = J;E$. 
\end{lemma}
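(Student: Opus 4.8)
The plan is to prove the set-theoretic identity $\kentsdiamond = J;E$ by showing, for arbitrary $x \in X$ and $a \in A$, that $x\kentsdiamond a$ holds if and only if $x(J;E)a$ holds, unfolding both sides to first-order conditions on $E$, $I$, and $J$ and checking they coincide. First I would recall the relevant definitions: by the defining equivalence just above the lemma, $x\kentsdiamond a$ iff $a\kentsbox x$, and by \eqref{eq:lax approx}, $a\kentsbox x$ iff $\forall b(aEb \Rightarrow bIx)$; equivalently, writing this with $J$, iff $\forall b(aEb \Rightarrow xJb)$.

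Next I would unfold the right-hand side using the composition of relations in $X\times A$ and $A\times A$ introduced in Section~\ref{ssec:relsem}, namely $x(J;E)a$ iff $x \in J^{(0)}[E^{(0)}[a]]$ iff $\forall b(bEa \Rightarrow xJb)$. Since $E$ is an equivalence relation, it is symmetric, so $bEa$ iff $aEb$; hence the condition $\forall b(bEa \Rightarrow xJb)$ is literally the same as $\forall b(aEb \Rightarrow xJb)$, which is exactly the unfolding of $x\kentsdiamond a$ obtained in the previous step. This establishes the equivalence for all $x,a$, hence the claimed equality of relations.

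The only point requiring a little care — and the closest thing to an obstacle — is making sure the correct composition convention is being used: the paper defines $R;S$ differently for relations of different types (one convention in Section~\ref{ssec:relsem} for $R\subseteq X\times A$, $S\subseteq A\times A$, and another in the paragraph before Proposition~\ref{lemma:correspondences} for relations between $A$ and $X$), and the statement $\kentsdiamond = J;E$ uses $J \subseteq X\times A$ composed with $E \subseteq A\times A$, which falls under the Section~\ref{ssec:relsem} convention $u(R;S)w$ iff $\forall v(vSw \Rightarrow uRv)$. Once the right reading is fixed, the proof is a one-line unfolding plus an appeal to the symmetry of $E$; reflexivity of $E$ is not even needed for this particular identity (it is needed for the containments $\kentsbox \subseteq I \subseteq R_\wbox$ mentioned before, but not here). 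I would write the proof as a short chain of iff-equivalences displayed in a single \texttt{align*}-free line to avoid any paragraph-break issues, explicitly flagging the use of symmetry of $E$ at the step where $bEa$ is replaced by $aEb$.
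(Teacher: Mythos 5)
Your proposal is correct and follows essentially the same route as the paper: unfold $x\kentsdiamond a$ via the definitions of $\kentsdiamond$, $\kentsbox$ and $J$, unfold $x(J;E)a$ via the composition of Section~\ref{ssec:relsem}, and observe the two first-order conditions coincide. The only (cosmetic) difference is that you make the appeal to symmetry of $E$ explicit where the paper silently writes $bEa$ in place of $aEb$ when unfolding $\kentsbox$, and you unfold $x\in J^{(0)}[E^{(0)}[a]]$ directly rather than passing through the Galois-connection step (Lemma~\ref{lemma: basic}.2).
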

\begin{proof}
For any $a\in A$ and $x\in X$,

{\small
{{\centering 
\begin{tabular}{rcll}
$x\kentsdiamond a$ & iff & $a\kentsbox x$&\quad Definition of $\kentsdiamond$\\ 
&iff & $\forall b(bEa \Rightarrow bIx)$ &\quad Definition of $\kentsbox$\\ 
&iff & $\forall b(bEa \Rightarrow xJb)$ &\quad Definition of $J$\\ 
&iff & $E^{(0)}[a] \subseteq J^{(1)}[x]$ &\quad notation $T^{(0)}[-]$ and $T^{(1)}[-]$\\ 
&iff & $x\in J^{(0)}[E^{(0)}[a]]$ &\quad Lemma \ref{lemma: basic}.2\\ 
  &iff   & $x (J ; E) a$. &\quad Definition of $J;E$ \\
\end{tabular}
\par}}
 }
\end{proof}
In \cite[Section 5]{conradie2021rough} and \cite[Section 3]{greco2019logics}, the logic of rough formal contexts  was introduced, based on the theory of enriched formal contexts as models of non-distributive modal logics, the characterization results reported on in Proposition  \ref{lemma:correspondences}, and the following:
\begin{lemma} \label{thm:equivalent_s_rewriting} (\cite[Lemma 5.3]{conradie2021rough})
For any polarity $\mathbb{P} = (A, X, I)$, and any $I$-compatible relation $E\subseteq A\times A$ such that its associated $\kentsbox\subseteq A\times X$ (defined as in \eqref{eq:lax approx}) is $I$-compatible,\footnote{Notice that $E$ being $I$-compatible does not imply that $\kentsbox$ is. Let $\mathbb{G} = (\mathbb{P}, E)$ s.t.~ $A: = \{a, b\}$, $X: = \{x, y\}$, $I: = \{(a, x), (a, y), (b, y)\}$, and $E: = A\times A$. Then $E$ is $I$-compatible. However,  $\kentsbox = \{(a, y), (b, y)\}$ is not, as $\kentsbox^{(0)}[x] = \varnothing$ is not Galois stable, since $\varnothing^{\uparrow\downarrow} = X^{\downarrow} = \{a\}$. In \cite{greco2019logics}, it was remarked that $\kentsbox$ being $I$-compatible does not imply that $E$ is.}

{{\centering 
$E$ is reflexive \quad iff \quad $\kentsbox \subseteq I$;
\quad\quad\quad and \quad\quad\quad
$E$ is transitive \quad iff \quad $\kentsbox \subseteq \kentsbox ; \kentsbox$.
\par}}
\end{lemma}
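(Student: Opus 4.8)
The statement has two biconditionals, which I would prove separately but by the same strategy: unfold the definition of $\kentsbox$ from \eqref{eq:lax approx} and then use the round-bracket calculus of Lemma \ref{lemma: basic} together with Lemma \ref{lemma:s_as_i_e} to turn the relational inclusions into quantified conditions on $E$.

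\medskip\noindent\textit{Reflexivity.} For the easy direction, assume $E$ is reflexive. Then for any $a$, $aEa$, so $a\kentsbox x$ (i.e. $\forall b(aEb\Rightarrow bIx)$) in particular forces $aIx$; hence $\kentsbox\subseteq I$. Conversely, assume $\kentsbox\subseteq I$; I want $aEa$ for all $a$. The trick is to find a feature $x$ with $a\kentsbox x$ but whose $I$-companions exclude everything off the $E$-class of $a$ — more precisely, I would argue at the level of extensions/intensions using $I$-compatibility of $\kentsbox$: since $E^{(1)}[a]$ is Galois-stable (this is the $I$-compatibility hypothesis on $E$, via $R_\wbox$, or can be derived from that of $\kentsbox$), and $\kentsbox^{(0)}[x] = I^{(0)}[E^{(1)}[a]]$-type sets are handled by Lemma \ref{lemma: basic}, one shows that $\kentsbox\subseteq I$ collapses to the condition that $a$ lies in the $E$-closure of $a$, i.e. $aEa$. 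Concretely: $a\kentsbox x$ iff $E^{(0)}[a]\subseteq I^{(0)}[x]=x^{\downarrow}$, so $\kentsbox^{(0)}[x] = \{a \mid E^{(0)}[a]\subseteq x^\downarrow\}$; chasing $\kentsbox\subseteq I$ through Lemma \ref{lemma: basic}.2–3 gives $E^{(0)}[a]\subseteq a^{\uparrow\downarrow}$, and applying $(\cdot)^\uparrow$ and Galois-stability yields $a\in E^{(0)}[a]^{\uparrow\downarrow}\subseteq \ldots$, ultimately $aEa$. (This direction is the one where the $I$-compatibility of $\kentsbox$ is genuinely used, and will be the main obstacle: one must be careful that the counterexample in the footnote shows the hypothesis is not vacuous, so the argument cannot be purely relational but must pass through Galois-stable sets.)

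\medskip\noindent\textit{Transitivity.} Here I would not argue from scratch but reduce to Proposition \ref{lemma:correspondences}(3) via Lemma \ref{lemma:s_as_i_e}. By that lemma $\kentsdiamond = J;E$, so $\kentsbox$ and $\kentsdiamond$ are exactly the relations $R_\wbox, R_\wdia$ that an enriched formal context built from $E$ would carry; the composition $\kentsbox;\kentsbox$ (in the sense defined just before Proposition \ref{lemma:correspondences}) corresponds to $R_\Box;R_\Box$. Transitivity of $E$ means $E\subseteq E;E$ (relational composition of $E$ with itself), and I would show $aEb \,\&\, bEc \Rightarrow aEc$ is equivalent, after dualizing through $J$ and $I$-compatibility, to $\kentsbox\subseteq\kentsbox;\kentsbox$: spell out $a(\kentsbox;\kentsbox)x$ as $a\in \kentsbox^{(0)}[I^{(1)}[\kentsbox^{(0)}[x]]]$, rewrite $\kentsbox^{(0)}[x]=\{c\mid E^{(0)}[c]\subseteq x^\downarrow\}$, and use Lemma \ref{lemma: basic}.2–4 to see that this set's double-bracket closure is controlled by $E;E$. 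The forward implication is the routine one; the converse again leans on Galois-stability to recover a genuine $E$-edge from the relational inclusion.

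\medskip\noindent In both parts the shape of the argument is: \emph{(i)} rewrite $\kentsbox$-membership as a containment $E^{(0)}[-]\subseteq(-)^\downarrow$; \emph{(ii)} translate the relational inclusion into a statement about Galois-stable sets using Lemma \ref{lemma: basic}; \emph{(iii)} for the hard (converse) directions, instantiate at principal closed sets and use $I$-compatibility of $E$ (and of $\kentsbox$) to extract a concrete $E$-edge. I expect step \emph{(iii)} to be the crux, precisely because — as the footnote to Lemma \ref{thm:equivalent_s_rewriting} stresses — the correspondence is not purely order-theoretic folklore: one really needs the compatibility hypotheses, and the proof must be written so that it is transparent where they enter.
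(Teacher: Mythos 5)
First, a point of reference: the paper itself contains no proof of this lemma --- it is imported verbatim from \cite[Lemma 5.3]{conradie2021rough} --- so your attempt can only be measured against the standard Galois-connection argument. Your overall plan (rewrite $a\kentsbox x$ as a containment of sections, push it through Lemma~\ref{lemma: basic}, and invoke Galois-stability in the converse directions) is the right one, and both easy directions are fine, but the two hard directions contain genuine gaps. For reflexivity, the one concrete chain you write down is reversed: from $\kentsbox\subseteq I$ one gets, for every $a$, $(E^{(1)}[a])^{\uparrow}\subseteq a^{\uparrow}$ (note that \eqref{eq:lax approx} uses $aEb$, so the relevant section is $E^{(1)}[a]=\{b\mid aEb\}$ rather than your $E^{(0)}[a]$; $E$ is not assumed symmetric here), and applying the antitone map $(\cdot)^{\downarrow}$ yields $a^{\uparrow\downarrow}\subseteq (E^{(1)}[a])^{\uparrow\downarrow}=E^{(1)}[a]$, whence $aEa$ because $a\in a^{\uparrow\downarrow}$. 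Your chain instead asserts the opposite inclusion ``$E^{(0)}[a]\subseteq a^{\uparrow\downarrow}$'', which does not follow from $\kentsbox\subseteq I$ and from which reflexivity cannot be extracted. Moreover, your aside that the stability of $E$'s sections ``can be derived from that of $\kentsbox$'' is false and contradicts the paper's own footnote (credited to \cite{greco2019logics}): take $A=\{a\}$, $X=\{x\}$, $I=\{(a,x)\}$, $E=\varnothing$; then $\kentsbox=I$ is $I$-compatible and $\kentsbox\subseteq I$, yet $E$ is not reflexive. So it is precisely the $I$-compatibility of $E$ that drives this direction, not that of $\kentsbox$.

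For transitivity, the appeal to Proposition~\ref{lemma:correspondences}(3) does no work: that proposition equates validity of $\Box p\vdash\Box\Box p$ with the inclusion $R_\Box\subseteq R_\Box\,;R_\Box$ and says nothing about transitivity of $E$; the content of the present lemma is exactly the further equivalence between that inclusion (for $R_\Box=\kentsbox$) and transitivity of $E$, which must be argued directly. The missing converse runs as follows: assume $\kentsbox\subseteq\kentsbox\,;\kentsbox$, $aEb$ and $bEc$, and let $x$ be any feature with $a\kentsbox x$; unfolding $a(\kentsbox;\kentsbox)x$ gives $E^{(1)}[a]\subseteq y^{\downarrow}$ for every $y\in(\kentsbox^{(0)}[x])^{\uparrow}$, so in particular $b\in(\kentsbox^{(0)}[x])^{\uparrow\downarrow}=\kentsbox^{(0)}[x]$ by $I$-compatibility of $\kentsbox$; hence $b\kentsbox x$, and $bEc$ then yields $cIx$. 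Since $x$ ranged over $(E^{(1)}[a])^{\uparrow}$, we conclude $c\in(E^{(1)}[a])^{\uparrow\downarrow}=E^{(1)}[a]$ by $I$-compatibility of $E$, i.e.\ $aEc$. Your sketch gestures at ``Lemma~\ref{lemma: basic}.2--4'' and a ``double-bracket closure controlled by $E;E$'' but never produces these two stability steps, which are the entire content of the hard direction; as written, the transitivity half is a statement of intent rather than a proof, and the $I$-compatibility of $\kentsbox$ is genuinely used here --- not, as you claim, in the reflexivity direction.
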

These results imply that the characterizing properties of rough formal contexts can be taken as completely axiomatised in the modal language $\mathcal{L}$ via the following axioms: 
\smallskip

{{\centering
$\wbox \phi\vd \phi$ $\quad \quad \wbox \phi\vd \wbox\wbox \phi$  $\quad\quad \phi\vd {\rhd}{\rhd}\phi$.
\par}}
\smallskip

\noindent Clearly, any rough formal context $\mathbb{G} = (\mathbb{P}, E)$ such that $E$ is $I$-compatible is an enriched formal $\mathcal{L}_{\rhd}$-context, where $\mathcal{L}_{\rhd}$ is the $\{\wbox, \wdia\}$-free fragment of $\mathcal{L}$. However, interestingly, it is impossible to capture the reflexivity and transitivity of $E$ by means of $\mathcal{L}_{\rhd}$-axioms, as the next lemma shows:

\begin{lemma}
\label{lemma:reflnotmodallydef}
The  class of enriched formal $\mathcal{L}_{\rhd}$-contexts $\mathbb{F} = (\mathbb{P}, R_\rhd)$ such that $R_\rhd\subseteq A\times A$ is reflexive (resp.~transitive) is not modally definable in its associated language $\mathcal{L}_{\rhd}$.
\end{lemma}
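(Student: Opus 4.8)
The plan is to derive both non-definability claims from a single simple remark together with an explicit pair of contexts. The remark is that the interpretation of $\mathcal{L}_{\rhd}$-formulas on a model $\mathbb{M}=(\mathbb{F},V)$ with $\mathbb{F}=(\mathbb{P},R_{\rhd})$ depends only on the complete lattice $\mathbb{P}^{+}$ together with the unary operation $f_{\rhd}\colon\mathbb{P}^{+}\to\mathbb{P}^{+}$ sending a concept $c$ to the concept whose extension is $R_{\rhd}^{(0)}[\val{c}]$ (this set is Galois-stable because $\val{c}$ is, by $I$-compatibility of $R_{\rhd}$, Lemma~\ref{lemma: basic}.5, and the fact that intersections of Galois-stable sets are Galois-stable): indeed $\val{\top}$, $\val{\bot}$, $\val{\phi\wedge\psi}$, $\val{\phi\vee\psi}$, $\val{{\rhd}\phi}$ are respectively the top, the bottom, the meet, the join and $f_{\rhd}$ of the relevant components, all computed in $(\mathbb{P}^{+},f_{\rhd})$. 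Hence $\mathbb{F}\models\phi\vdash\psi$ iff the inclusion $\val{\phi}\subseteq\val{\psi}$ holds in $(\mathbb{P}^{+},f_{\rhd})$ under every assignment, so two enriched formal $\mathcal{L}_{\rhd}$-contexts with isomorphic algebras $(\mathbb{P}^{+},f_{\rhd})$ validate exactly the same $\mathcal{L}_{\rhd}$-sequents. It therefore suffices to produce, for reflexivity (resp.\ transitivity), two enriched formal $\mathcal{L}_{\rhd}$-contexts $\mathbb{F}_{1},\mathbb{F}_{2}$ with isomorphic associated algebras such that $R_{\rhd}$ is reflexive (resp.\ transitive) on $\mathbb{F}_{1}$ but not on $\mathbb{F}_{2}$: if a set $\Sigma$ of $\mathcal{L}_{\rhd}$-sequents defined the class, then from $\mathbb{F}_{1}\models\Sigma$ we would get $\mathbb{F}_{2}\models\Sigma$, hence that $R_{\rhd}$ is reflexive (resp.\ transitive) on $\mathbb{F}_{2}$, a contradiction.

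For reflexivity I would take $\mathbb{P}_{1}=(\{a_{1},a_{2}\},\{x_{1},x_{2}\},I)$ with $I=\{(a_{1},x_{1}),(a_{2},x_{2})\}$, put $R_{\rhd}^{1}=\{(a_{1},a_{1}),(a_{2},a_{2})\}$ and $\mathbb{F}_{1}=(\mathbb{P}_{1},R_{\rhd}^{1})$; then let $\mathbb{P}_{2}$ be $\mathbb{P}_{1}$ with one extra object $b$ incident to no feature, and set $\mathbb{F}_{2}=(\mathbb{P}_{2},R_{\rhd}^{2})$ with $R_{\rhd}^{2}:=R_{\rhd}^{1}$, now regarded as a relation on $\{a_{1},a_{2},b\}$. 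The checks are routine: the only sets $R_{\rhd}^{(0)}[-]$, $R_{\rhd}^{(1)}[-]$ occurring are $\varnothing$, $\{a_{1}\}$, $\{a_{2}\}$, all Galois-stable in both polarities, so $R_{\rhd}^{1}$ and $R_{\rhd}^{2}$ are $I$-compatible; the concept lattice of each $\mathbb{P}_{i}$ is the four-element Boolean lattice (in $\mathbb{P}_{2}$ the new object $b$ belongs only to the top concept, since $\varnothing^{\uparrow\downarrow}=\varnothing$ forces $\{b\}^{\uparrow\downarrow}$ to be the whole object set); and in both cases $f_{\rhd}$ fixes the two atoms and swaps top and bottom, so $(\mathbb{P}_{1}^{+},f_{\rhd})\cong(\mathbb{P}_{2}^{+},f_{\rhd})$. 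Finally $R_{\rhd}^{1}$ is reflexive on $\{a_{1},a_{2}\}$ whereas $R_{\rhd}^{2}$ is not, because $(b,b)\notin R_{\rhd}^{2}$; this settles the reflexivity case.

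For transitivity the strategy is the same, but the pair of contexts has to be chosen with more care. Indeed the two cheapest ways of enlarging a context without changing its associated algebra — adjoining an object incident to no feature (more generally, an object whose concept is a join of existing ones), or replacing an object by two Galois-indistinguishable copies — both preserve transitivity of $R_{\rhd}$, since every $R_{\rhd}$-chain through the new points factors through old ones; this is exactly why the reflexivity trick above does not transfer verbatim, and it also shows that for a finite (hence perfect) concept lattice all polarity representations have the same transitivity status. So the transitivity counterexample must be of a genuinely different, larger kind: one chooses $\mathbb{F}_{1}$ with transitive $R_{\rhd}$ whose algebra $(\mathbb{P}_{1}^{+},f_{\rhd})$ admits a second representation $\mathbb{F}_{2}$ by a (necessarily infinite, non-perfect) polarity on which $R_{\rhd}$ fails transitivity, in the spirit of the classical fact that transitivity of the accessibility relation is not definable in the modal language of the sufficiency/window operator alone. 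I expect this last step — exhibiting such a pair explicitly and verifying the isomorphism of the two algebras — to be the main obstacle; once the reduction of the first paragraph is in place, the reflexivity half is essentially immediate.
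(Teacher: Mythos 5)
Your reduction in the first paragraph is sound: sequent validity on an enriched formal $\mathcal{L}_{\rhd}$-context depends only on the algebra $(\mathbb{P}^+,f_\rhd)$, so isomorphic algebras with different relational properties refute definability. This is a genuinely different route from the paper, which instead restricts to the special polarities $(W,W,\neq)$ arising from Kripke frames, translates $\rhd$ as $\Box\neg$, and imports classical non-definability facts (irreflexivity for the first claim; non-closure under disjoint unions, via Goldblatt--Thomason, of ``$R^c$ transitive'' for the second). Your reflexivity half is correct and complete: I checked the $I$-compatibility of both relations, that both concept lattices are the four-element Boolean lattice, and that $f_\rhd$ fixes the atoms and swaps the bounds in both cases. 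The problem is the transitivity half: you explicitly leave the required pair of contexts unconstructed, so the statement ``(resp.\ transitive)'' is simply not proved. That is a genuine gap, not a routine verification.

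Moreover, the heuristic you offer to explain the difficulty --- that for a finite (perfect) concept lattice all polarity representations have the same transitivity status, so an infinite example would be needed --- is false, and seeing why closes the gap within your own framework. For an $I$-compatible $R_\rhd$ one has $aR_\rhd b$ iff $a\in R_\rhd^{(0)}[b^{\uparrow\downarrow}]$, so $R_\rhd$ is determined by $f_\rhd$ together with the object-concept map; besides objects with no features (object concept $=\top$), you can also adjoin an object incident to \emph{all} features, whose object concept is $\bot$, and since $f_\rhd(\bot)=\top$ and $\bot\le f_\rhd(c)$ for every $c$, the induced relation must relate every object to and from the new object, which destroys transitivity without changing the algebra. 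Concretely: keep your $\mathbb{F}_1$ (where $R^1_\rhd$ is the identity, hence transitive), and let $\mathbb{P}_2$ add an object $b$ with $bIx_1$ and $bIx_2$, setting $R^2_\rhd:=R^1_\rhd\cup\bigl(\{a_1,a_2,b\}\times\{b\}\bigr)\cup\bigl(\{b\}\times\{a_1,a_2\}\bigr)$. Then $R^2_\rhd$ is $I$-compatible (all sets $R^{2\,(0)}_\rhd[-]$, $R^{2\,(1)}_\rhd[-]$ are among $\{a_1,b\},\{a_2,b\},\{a_1,a_2,b\}$, which are Galois-stable), the algebra is again the four-element Boolean lattice with atoms fixed and bounds swapped, so $(\mathbb{P}_2^+,f_\rhd)\cong(\mathbb{P}_1^+,f_\rhd)$, yet $a_1R^2_\rhd b$ and $bR^2_\rhd a_2$ while $(a_1,a_2)\notin R^2_\rhd$. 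With this pair added, your argument establishes both claims; as submitted, it establishes only the reflexivity claim.
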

\begin{proof}
Assume for contradiction that $\mathcal{L}_{\rhd}$-axioms $\phi\vd \psi$ and $\chi\vd \xi$ exist such that $\mathbb{F}\models \phi\vd \psi$ iff $R_\rhd$ is reflexive, and $\mathbb{F}\models \chi\vd \xi$ iff $R_\rhd$ is transitive for any enriched formal $\mathcal{L}_{\rhd}$-context $\mathbb{F}= (\mathbb{P}, R_\rhd)$. Then, these equivalences would hold in particular for those special formal $\mathcal{L}_{\rhd}$-contexts $\mathbb{F} = (\mathbb{P}_W, R_\rhd)$ such that $\mathbb{P}_W = (W_A, W_X, I_{\Delta^c})$ such that $W_A = W_X = W$ for some set $W$, and $aI_{\Delta^c} x$ iff $a\neq x$, and $R_{\rhd}: = H_{R^c}$   is defined as $a H_{R^c}b$ iff $(a, b)\notin R$ for some binary relation $R\subseteq W\times W$. By construction, letting $\mathbb{X} = (W, R)$, the following chain of equivalences holds:
$\mathbb{F}\models \phi\vd \psi$ iff $\val{\phi}_V\subseteq \val{\psi}_V$ for every valuation $V: \Prop\to \mathbb{P}^+$. However, by construction, $\mathbb{P}^+\cong \mathcal{P}(W)$ (cf.~\cite[Proposition 3.4]{conradie2021rough}). Moreover, the definition of the forcing relation $\Vdash$ on $\mathbb{F}$ implies that 
\begin{center}
    \begin{tabular}{cll}
         $\val{{\rhd}\phi}$ $=$ $R_{\rhd}^{(0)}[\val{\phi}]$ $=$ $H_{R^c}^{(0)}[\val{\phi}]$ 
         & $=$ & $ \{b\in W_A\mid \forall a(a\Vdash \phi\Rightarrow aR^cb)\}$\\ 
          &  $=$ & $ \{b\in W_A\mid \forall a(aRb \Rightarrow a \not \Vdash \phi)\} $\\

    \end{tabular}
\end{center}
That is, restricted to the class of $\mathcal{L}_{\rhd}$-contexts which arise from classical Kripke frames $\mathbb{X} = (W, R)$ in the way indicated above, the interpretation of ${\rhd}$-formulas coincides with the interpretation of $\wbox\neg$-formulas in the language of classical modal logic, which induces a translation $\tau$, from $\mathcal{L}_{\rhd}$-formulas to formulas in the language of classical modal logic, which is preserved and reflected from the  special formal $\mathcal{L}_{\rhd}$-contexts $\mathbb{F}$ to the Kripke frames with which they are associated. Therefore, by construction, for any Kripke frame $\mathbb{X} = (X, R)$,  $R$ is irreflexive iff $H_{R^c}$ is reflexive iff 
$\mathbb{F}\models\phi\vd\psi$ iff $\mathbb{X}\models \tau(\phi)\vdash \tau(\psi)$, contradicting the well known fact that the class of Kripke frames $\mathbb{X} = (X, R)$ such that $R$ is irreflexive  is not modally definable.

Reasoning similarly,  to show the statement concerning transitivity, it is enough to see that the class  of Kripke frames $\mathbb{X} = (W, R)$ s.t.~$R^c$ is transitive is not modally definable. Consider the Kripke frames $\mathbb{X}_i= (W_i, R_i)$ such that $W_i= \{a_i, b_i \}$, $R_i = \{(a_i,b_i) \}$, for $1\leq i\leq 2$. Clearly,  $R_i^c$  is transitive in $\mathcal{F}_i$, so the two frames satisfy the property. However, their disjoint union  $\mathbb{X}_1 \cupdot \mathbb{X}_2 =(W,R)$, given by $W= \{a_1,b_1,a_2,b_2\}$ and $R =\{(a_1,b_1), (a_2,b_2)\}$, does not: indeed, $(a_1, a_2), (a_2, b_1)\in R^c$ but $(a_1,b_1)\notin R^c$.   Hence, the statement follows from the Goldblatt-Thomason theorem for classical modal logic. 

\end{proof}

\section{Relational labelled calculi for $\mathcal{L}$-logics}
\label{sec:relcalculus}

Below,  $p, q$ denote atomic propositions; $a, b, c$ (resp.~$x, y, z$) are labels corresponding to objects (resp.~features). Given labels $a$, $x$ and a modal formula $A$, well-formed formulas are of the type $a: A$ and $x:: A$, while $\varphi, \psi$ are meta-variables for well-formed formulas. Well-formed  terms are of any of the following shapes: $aIx$, $aR_\wbox x$, $x R_\wdia a$, $aR_\bbox x$, $x R_\bdia a$, and $t_1 \Rarr t_2$, where $t_1$ is of any of the following shapes:  $a R_\wbox x$, $a R_\bbox x$, $y R_\wdia a$, $y R_\bdia a$, $a R_\rhd b$, $a R_\blacktriangleright b$,  and $t_2$ is of the form $a I y$. Relational terms $t_1 \Rarr t_2$ are interpreted as $\forall u (t_1 \rightarrow t_2)$ where $u$ is the variable shared by $t_1$ and $t_2$. 
A sequent is an expression of the form $\Gamma \vd \Delta$, where $\Gamma, \Delta$ are meta-variables for multisets of well-formed formulas or terms. For any labels $u$, $v$ and relations $R$, $S$ we write $u (R;S) v$ as a shorthand for the term $w S v \Rarr u R w$. 


\subsection{Labelled calculus $\mathbf{R.L}$ for the basic $\mathcal{L}$-logic}
\label{sec:calculus}
{{\centering
\begin{tabular}{c}
\mc{1}{c}{\rule[-1.85mm]{0mm}{8mm}\textbf{Initial rules and cut rules}\rule[-1.85mm]{0mm}{6mm}} \\
\AXC{$\ $}
\LL{Id$_{\,a:p}$}
\UIC{$\Gamma, a: p \vd a: p, \Delta$}
\DP
 \   
\AXC{$\ $}
\RL{Id$_{\,x::p}$}
\UIC{$\Gamma, x:: p \vd x:: p, \Delta$}
\DP
 \ \  
\AXC{$ \ $}
\LL{$\bot$}
\UIC{$\Gamma \vd x:: \bot, \Delta$}
\DP 
 \   
\AXC{$ \ $}
\RL{$\top$}
\UIC{$\Gamma \vd a: \top, \Delta$}
\DP
\\[\rulespace]
\AXC{$\Gamma \vd a: A, \Delta$}
\AXC{$\Gamma', a: A \vd \Delta'$}
\LL{Cut$_{\,aa}$}
\BIC{$\Gamma, \Gamma' \vd \Delta, \Delta'$}
\DP
 \  \ 
\AXC{$\Gamma \vd x:: A, \Delta$}
\AXC{$\Gamma', x:: A \vd \Delta'$}
\RL{Cut$_{\,xx}$}
\BIC{$\Gamma, \Gamma' \vd  \Delta, \Delta'$}
\DP \\[\rulespace]
\end{tabular}
\par}}

{{\centering
\begin{tabular}{rl}
\mc{2}{c}{\rule[-1.85mm]{0mm}{8mm}\textbf{\ \ \ \ Switch rules$^\ast$} \rule[-1.85mm]{0mm}{6mm}}\\
\AXC{$\Gamma,  x:: B \vd x:: A, \Delta$}
\LL{S$xa$}
\UIC{$\Gamma,  a: A \vd  a: B, \Delta $}
\DP
 \ & \  
\AXC{$\Gamma,  a: A \vd  a: B, \Delta $}
\RL{S$ax$}
\UIC{$\Gamma,  x:: B \vd x:: A, \Delta$}
\DP
 \\[\rulespace]
\ \ \ \ 
\AXC{$ \Gamma, y R_\wdia a \Rarr b I y   \vd   b:A , \Delta $}
\LL{S$a \wdia x$}
\UIC{$\Gamma, x::A \vd x R_\Diamond a, \Delta$}
\DP
 \ & \ 
\AXC{$ \Gamma,a: A  \vd   a R_\wbox x, \Delta $}
\RL{S$ a \wbox x$}
\UIC{$\Gamma,  b R_\wbox x \Rarr b I y  \vd  y::A, \Delta$}
\DP
 \\[\rulespace]
\AXC{$\Gamma,  b R_\wbox x \Rarr b I y  \vd  y::A, \Delta$}
\LL{S$ x\wbox a$}
\UIC{$ \Gamma,a: A \vd   a R_\wbox x, \Delta $}
\DP
\ & \ 
\AXC{$\Gamma, x::A \vd x R_\Diamond a, \Delta$}
\RL{S$ x \wdia a$}
\UIC{$ \Gamma, y R_\wdia a \Rarr b I y   \vd   b:A , \Delta $}
\DP
 \\[\rulespace]
\AXC{$ \Gamma, b:A  \vd    y R_\wdia a \Rarr b I y , \Delta $}
\LL{S$ a \wdia x$}
\UIC{$\Gamma, x R_\Diamond a \vd  x::A, \Delta$}
\DP
 \ & \ 
\AXC{$ \Gamma,a: A  \vd   a R_\wbox x, \Delta $}
\RL{S$ a \wbox x $}
\UIC{$\Gamma, y:: A \vd b R_\wbox x \Rarr b I y, \Delta$}
\DP
 \\[\rulespace]
\AXC{$\Gamma,  y::A \vd   b R_\wbox x \Rarr b I y, \Delta$}
\LL{S$ x\wbox a $}
\UIC{$ \Gamma,a R_\wbox x\vd  a: A   , \Delta $}
\DP
\ & \ 
\AXC{$\Gamma, x R_\Diamond a \vd  x::A , \Delta$}
\RL{S$ x \wdia a $}
\UIC{$ \Gamma,b:A    \vd   y R_\wdia a \Rarr b I y,  \Delta $}
\DP
 \\[\rulespace]
\end{tabular}
\par}}

{{\centering
\begin{tabular}{rl}
\ \ \ \ \ 
\AXC{$\Gamma,     b R_\rhd a \Rarr b I y \vd y::A,  \Delta$}
\LL{S$ x{\rhd} a$}
\UIC{$ \Gamma,  c: A   \vd  c R_\rhd a, \Delta $}
\DP
\ & \ 
\AXC{$ \Gamma,c:A    \vd   c R_\rhd a,  \Delta $}
\RL{S$ a {\rhd} x $}
\UIC{$\Gamma,     b R_\rhd a \Rarr b I y \vd y::A,  \Delta$}
\DP
 \\[\rulespace]
\AXC{$\Gamma,   y::A    \vd b R_\rhd a \Rarr b I y, \Delta$}
\LL{S$ x{\rhd} a$}
\UIC{$ \Gamma, c R_\rhd a   \vd   c: A , \Delta $}
\DP
\ & \ 
\AXC{$ \Gamma, c R_\rhd a \vd  c:A   ,  \Delta $}
\RL{S$ a {\rhd} x $}
\UIC{$\Gamma, y::A    \vd  b R_\rhd a \Rarr b I y,  \Delta$}
\DP
 \\[\rulespace]
\mc{2}{p{0.9\textwidth}}{\centering ${}^\ast$Side condition: the variables  $x,y,a,b$  occurring as labels of a formula \\ in the premise of any of these rules must not occur in $\Gamma, \Delta$.}
\\
\end{tabular}
\par}}

\noindent Switch rules  for  $R_\bbox$,  $R_\bdia$, and $R_\blacktriangleright$ are analogous to those for $R_\wbox$,  $R_\wdia$, and $R_\rhd$. These rules encode the I-compatibility conditions of $R_\bbox$,  $R_\bdia$,$R_\blacktriangleright$,$R_\wbox$,  $R_\wdia$, and $R_\rhd$ (cf.\ Remark \ref{remark:switchicomp}).

{{\centering
\begin{tabular}{rl}
\mc{2}{c}{\rule[-1.85mm]{0mm}{8mm}\textbf{Approximation rules$^*$} \rule[-1.85mm]{0mm}{6mm}}\\
\ \ \ \ \ \ \ \ \ \ \ \AXC{$\Gamma,  x:: A \vd aIx, \Delta$}
\LL{$approx_x$}
\UIC{$ \Gamma \vd a: A, \Delta $}
\DP
\ & \ 
  \AXC{$\Gamma,  a: A \vd aIx, \Delta$}
\RL{$approx_a$}
\UIC{$ \Gamma \vd   x::A, \Delta $}
\DP
 \\[\rulespace]
\mc{2}{p{0.9\textwidth}}{\centering${}^\ast$Side condition: the variables  $x,y$  occurring as labels of a formula \\ in the premise of any of these rules must not occur in $\Gamma, \Delta$.}
\\
\end{tabular}
\par}}

\noindent For  $T, T'\in \{R_\wdia,\, J,\, J;I,\, J;R_\wbox,\, J;R_\rhd,\, R_\bdia,\, J;R_\bbox,\, J;R_\blacktriangleright \}$ and  $S, S'\in \{R_\wbox,\, I,\, I;J,\, I;R_\wdia,\, I;R_\bdia, \, R_\bbox\}$ and for all labels  $u,v, w$ of the form $a$ or $x$, we have the following switch rules:

{{\centering
\begin{tabular}{rl}
\mc{2}{c}{\rule[-1.85mm]{0mm}{8mm}\textbf{Pure structure switch rules$^*$} \rule[-1.85mm]{0mm}{6mm}}\\
\AXC{$\Gamma,  x T u    \vd x T' v, \Delta$}
\LL{S$(I;S)$}
\UIC{$ \Gamma, a (I;T') v  \vd   a (I;T) u, \Delta $}
\DP
\ & \ 
\AXC{$\Gamma,  a S u    \vd a S' v, \Delta$}
\RL{S$(J;T)$}
\UIC{$ \Gamma, x (J;S') v  \vd   x (J;S) u, \Delta $}
\DP
 \\[\rulespace]
 \AXC{$ \Gamma, a (I;T') v  \vd   a (I;T) u, \Delta $}
\LL{-S$(I;S)$}
\UIC{$\Gamma,  x T u    \vd x T' v, \Delta$}
\DP
\ & \ 
\AXC{$ \Gamma, x (J;S') v  \vd   x (J;S) u, \Delta $}
\RL{-S$(J;T)$}
\UIC{$\Gamma,  a S u    \vd a S' v, \Delta$}
\DP
 \\[\rulespace]
 \AXC{$\Gamma   \vd  a S u, \Delta$}
\LL{Id$(I;J)_R$}
\UIC{$ \Gamma    \vd   a (I;(J;S)) u , \Delta $}
\DP
\ & \ 
\AXC{$\Gamma   \vd  x T u, \Delta$}
\RL{Id$(J;I)_R$}
\UIC{$ \Gamma    \vd   x (J;(I;T)) u , \Delta $}
\DP
 \\[\rulespace]
 \AXC{$\Gamma,  a S u \vd  \Delta$}
\LL{Id$(I;J)_L$}
\UIC{$ \Gamma, a (I;(J;S)) u    \vd    \Delta $}
\DP
\ & \ 
\AXC{$\Gamma,  x T u  \vd  \Delta$}
\RL{Id$(J;I)_L$}
\UIC{$ \Gamma, x (J;(I;T)) u     \vd   \Delta $}
\DP
 \\[\rulespace]
\mc{2}{p{0.9\textwidth}}{\centering${}^\ast$Side condition: the variable  $x$  (resp.~$a$) occurring  in the premise of  rules S$(I;S)$, -S$(I;S)$ (resp.~S$(J;T)$, -S$(J;T)$ ) must not occur in $\Gamma, \Delta$.}
\\
\end{tabular}
\par}}

\noindent The rules above  encode the definition of  $I$-composition of relations on formal contexts \cite[Definition3.10]{conradie2021rough}.

{{\centering
\begin{tabular}{c}
\mc{1}{c}{\rule[-1.85mm]{0mm}{8mm} \textbf{Adjunction rules}\rule[-1.85mm]{0mm}{8mm}} \\

\AXC{$\Gamma \vd  x R_\wdia a,  \Delta$}
\LL{\fns $\wdia \dashv \bbox$}
\UIC{$\Gamma \vd a  R_\bbox x,  \Delta$}
\DP 
 \ \ \  
\AXC{$\Gamma \vd a R_\wbox x,  \Delta$}
\LL{\fns $\bdia \dashv \wbox$}
\UIC{$\Gamma \vd  x R_\bdia a,  \Delta$}
\DP 
 \ \ \ 
\AXC{$\Gamma \vd a R_\rhd b,  \Delta$}
\LL{\fns $\rhd \dashv \;\blacktriangleright$}
\UIC{$\Gamma \vd  b R_\blacktriangleright a,  \Delta$}
\DP 
 \\[\rulespace]
\AXC{$\Gamma \vd a  R_\bbox x,  \Delta$}
\RL{\fns $\wdia \dashv \bbox^{-1}$}
\UIC{$\Gamma \vd x R_\wdia a,  \Delta$}
\DP 
 \ \ \    
\AXC{$\Gamma \vd x R_\bdia a,  \Delta$}
\RL{\fns $\bdia \dashv \wbox^{-1}$}
\UIC{$\Gamma \vd a R_\wbox x,  \Delta$}
\DP
 \ \ \   
\AXC{$\Gamma \vd a R_\blacktriangleright b,  \Delta$}
\RL{\fns $\blacktriangleright \,\dashv \,\rhd$}
\UIC{$\Gamma \vd  b R_\rhd a,  \Delta$}
\DP 
 \\
\end{tabular}
\par}}

\noindent 
Adjunction rules  encode the fact that operators $\wdia$ and $\bbox$,  $\bdia$ and $\wbox$, and $\rhd$ and $\blacktriangleright$ constitute pairs of adjoint operators. 

{{\centering
\begin{tabular}{rl}
\mc{2}{c}{\rule[-1.85mm]{0mm}{8mm} \textbf{Invertible logical rules for propositional connectives}\rule[-1.85mm]{0mm}{8mm}} \\

\AXC{$\Gamma, a : A, a: B \vd \Delta$}
\LL{$ \wedge_L$}
\UIC{$\Gamma, a : A \wedge B \vd \Delta$}
\DP
 \ & \  
\AXC{$\Gamma \vd a: A, \Delta$}
\AXC{$\Gamma \vd a: B, \Delta$}
\RL{$\wedge_R$}
\BIC{$\Gamma \vd a: A \wedge B, \Delta$}
\DP
 \\[\rulespace]

\AXC{$\Gamma \vd  x:: A, \Delta$}
\AXC{$\Gamma \vd  x:: B, \Delta$}
\LL{$\vee_L$}
\BIC{$\Gamma \vd  x:: A \vee B, \Delta$}
\DP
 \ & \  
\AXC{$\Gamma, x:: A, x:: B \vd \Delta$}
\RL{$ \vee_R$}
\UIC{$\Gamma, x:: A \vee B \vd \Delta$}
\DP


%


\end{tabular}
\par}}

{{\centering
\begin{tabular}{rl}

\mc{2}{c}{\rule[-1.85mm]{0mm}{8mm}\textbf{Invertible logical rules for modal connectives${}^\ast$}\rule[-1.85mm]{0mm}{8mm}} \\

\AXC{$ \Gamma, a:\Box A  \vd x:: A, a R_\Box x, \Delta $}
\LL{$\wbox_L$}
\UIC{$\Gamma, a:\Box A \vd a R_\Box x, \Delta$}
\DP
 \ & \  
\AXC{$ \Gamma, x:: A \vd a R_\Box x, \Delta $}
\RL{$\Box_R$}
\UIC{$\Gamma \vd a: \Box A, \Delta$}
\DP
 \\[\rulespace]

\AXC{$\Gamma, a:A \vd x R_\Diamond a, \Delta $}
\LL{$\Diamond_L$}
\UIC{$\Gamma \vd x:: \Diamond A, \Delta$}
\DP
 \ & \   
\AXC{$\Gamma, x:: \Diamond A \vd  a:A, x R_\Diamond a, \Delta$}
\RL{$\Diamond_R$}
\UIC{$\Gamma,  x:: \Diamond A  \vd x R_\Diamond a, \Delta$}
\DP
 \\[\rulespace]
\AXC{$ \Gamma, a:\rhd A  \vd b: A, a R_\rhd b, \Delta $}
\LL{$\rhd_L$}
\UIC{$\Gamma, a:\rhd A \vd a R_\rhd b, \Delta$}
\DP
 \ & \  
\AXC{$ \Gamma, b: A \vd a R_\rhd b, \Delta $}
\RL{$\rhd_R$}
\UIC{$\Gamma \vd a: \rhd A, \Delta$}
\DP
 \\[\rulespace]
\mc{2}{p{0.8\textwidth}}{\centering${}^\ast$Side condition: the variable $x$ (resp.~$a$, resp.~$b$) must not occur \\ in the conclusion of $\wbox_R$  (resp.~ $\wdia_L$, resp.~$\rhd_R$).} \\
\end{tabular}
\par}}

\noindent
Logical rules encode the definition of satisfaction and refutation for propositional and modal connectives discussed in Section \ref{ssec:relsem}. The proof of their soundness in Appendix \ref{appendix:soundbase} shows how this encoding works.

\subsection{Relational calculi for the axiomatic extensions of the basic $\mathcal{L}$-logic } \label{ssec: Relational calculus extension}
The structural rule corresponding to each axiom listed in Table \ref{axiom table} is generated as the read-off of the first-order condition corresponding to the given axiom as listed in Proposition \ref{lemma:correspondences}. For any nonempty subset $\Sigma$ of modal axioms as reported in Table \ref{axiom table}, we let $\mathbf{R.L}\Sigma$ denote the extension of $\mathbf{R.L}$ with the corresponding rules.
{\footnotesize{
\begin{table}
    \centering
   \begin{tabular}{|c|Sc||c|Sc|}
\hline
   \textbf{\ Modal axiom \ } & \textbf{\ Relational calculus rule \ }&\textbf{\ Modal axiom \ } & \textbf{\ Relational calculus rule \ }\\ 
   \hline
     $\wbox p \vd p$  &  \AXC{$\Gamma \vd a R_\wbox x, \Delta$}\UIC{$\Gamma \vd a I x, \Delta$} \DP 
   &   $ p \vd \wdia p $ &  \AXC{$\Gamma \vd x R_\wdia a, \Delta$}\UIC{$\Gamma \vd a I x, \Delta$} \DP \\[2ex]
     \hline 
    $p \vd \wbox\wdia p$ &  \AXC{$\Gamma \vd x R_\wdia a, \Delta$}\UIC{$\Gamma \vd x R_\bdia a, \Delta$} \DP 
     & $ \wdia \wbox p \vd  p $ &  \AXC{$\Gamma \vd x R_\bdia a, \Delta$}\UIC{$\Gamma \vd x R_\wdia a, \Delta$} \DP \\[2ex]
     \hline
      $\wbox p \vd \wbox\wbox p$ &  \AXC{$\Gamma  \vd  a R_\wbox x, \Delta$}\UIC{$\Gamma, b R_\wbox x \Rarr  y J b  \vd a R_\wbox y, \Delta$} \DP 
    &  $ \wdia p \vd \wdia\wdia p $ &  \AXC{$\Gamma \vd x R_\wdia a, \Delta$}\UIC{$\Gamma, y R_\wdia a \Rarr b I y \vd  x R_\wdia b,  \Delta$} \DP \\[2ex]
     \hline 
      $  p \vd {\rhd}{\rhd} p $ &  \AXC{$\Gamma \vd a R_\rhd b, \Delta$}\UIC{$\Gamma \vd  b R_\rhd a,  \Delta$} \DP &&\\[2ex]
     \hline 
\end{tabular}
\vspace{0.2 cm}
    \caption{Modal axioms and their corresponding rules.}
    \label{axiom table}
\end{table}
}}
\subsection{The relational calculus $\mathbf{R.L}\rho$ for the $\mathcal{L}$-logic of rough formal contexts}
\label{ssec: relational Kent calculus}

The calculus $\mathbf{R.L}$ introduced in Section \ref{sec:calculus} can be specialized so as to capture the semantic environment of rough formal contexts by associating the connective $\kbox$ (resp.\ $\kdia$) with relational labels in which $\kentsbox$ (resp.\ $\kentsdiamond$) occurs, and adding rules encoding the reflexivity and the transitivity of $E$, rather than  the (equivalent, cf.~Lemma \ref{thm:equivalent_s_rewriting}) first-order conditions on  $\kentsbox$. We need the following set of switching rules encoding the relation between $E$ and $I$, and the $I$-compatibility of $E$ and $\kentsbox$ (and $\kentsdiamond$).
\smallskip

{{\centering
\begin{tabular}{rl}
 \mc{2}{c}{\rule[-1.85mm]{0mm}{8mm}\textbf{Interdefinability rules}\rule[-1.85mm]{0mm}{6mm}} \\
\AXC{$\Gamma, b \kentsbox x \Rightarrow b I y \vd y :: A, \Delta$}
\LL{swSf$^*$}
\UIC{$\Gamma, a : A \vd a \kentsbox x, \Delta$}
\DP
 \quad\quad &
\AXC{$\Gamma, a : A \vd a \kentsbox x, \Delta$}
\RL{swSfi$^*$}
\UIC{$\Gamma, b \kentsbox x \Rightarrow b I y \vd y :: A, \Delta$}
\DP \\[\rulespace]

\AXC{$\Gamma, x :: A \vd x \kentsdiamond a, \Delta$}
\LL{swSdf$^*$}
\UIC{$\Gamma, b E a \vd b : A, \Delta$}
\DP
\quad\quad
&
\AXC{$\Gamma, b E a \vd b : A, \Delta$}
\RL{swSdfi$^*$}
\UIC{$\Gamma, x :: A \vd x \kentsdiamond a, \Delta$}
\DP \\[\rulespace]

\AXC{$\Gamma, a E c \vd a \kentsbox x, \Delta$}
\LL{swES$^*$}
\UIC{$\Gamma, b \kentsbox x \Rightarrow b I y \vd y \kentsdiamond a, \Delta$}
\DP \quad\quad
&
\AXC{$\Gamma, b \kentsbox x \Rightarrow b I y \vd y \kentsdiamond a, \Delta$}
\RL{swESi$^*$}
\UIC{$\Gamma, a E c \vd a \kentsbox x, \Delta$}
\DP\\[\rulespace]

\AXC{$\Gamma \vd a \kentsbox x, \Delta$}
\LL{curryS$^{**}$}
\UIC{$\Gamma, b E a \vd b I x, \Delta$}
\DP \quad\quad
&
\AXC{$\Gamma, b E a \vd b I x, \Delta$}
\RL{uncurryS$^{**}$}
\UIC{$\Gamma, \vd a \kentsbox x, \Delta$}
\DP \\
[\rulespace]
\mc{2}{p{0.8\textwidth}}{\centering \small ${}^\ast$Side condition: the variables  $y,a,b$  occurring as  labels to a formula \\ in the premise of any of these rules do not occur in $\Gamma, \Delta$.} \\
\mc{2}{p{0.8\textwidth}}{\centering \small $^{\ast\ast}$Side condition: $b$ does not occur $\Gamma, \Delta$.} \\
\end{tabular}

\begin{tabular}{rcl}
 \mc{3}{c}{\rule[-1.85mm]{0mm}{8mm}\textbf{Rules for equivalence relations}\rule[-1.85mm]{0mm}{6mm}} \\
\AXC{$\Gamma,   a E a \vd  \Delta$}
\LL{refl}
\UIC{$\Gamma  \vd  \Delta$}
\DP \quad
&
\AXC{$\Gamma  \vd a E b, \Delta$}
\RL{sym}
\UIC{$\Gamma \vd b E a, \Delta$}
\DP \quad 
&
\AXC{$\Gamma  \vd a E b, b E c  \Delta$}
\RL{trans}
\UIC{$\Gamma  \vd  a E c, \Delta$}
\DP\\
\end{tabular}
\par}}

\section{Properties of $\mathbf{R.L}\rho$ and $\mathbf{R.L}\Sigma$}
\label{sec:kentstory}
\subsection{Soundness}
\label{ssec:soundness}
Any sequent $\Gamma\vd \Delta$ is to be interpreted in any enriched formal $\mathcal{L}$-context $\mathbb{F} = (\mathbb{P}, R_\Box, R_\Diamond, R_\rhd)$ based on $\mathbb{P} = (A, X, I)$ in the following way: for any assignment $V:\mathsf{Prop} \to \mathbb{P}^+$ that can be uniquely extended to an assignment on $\mathcal{L}$-formulas, and for any interpretation of labels $\alpha: \{a, b, c, \ldots \} \to A$ and $\chi: \{x, y, z, \ldots \} \to X$, we let $\iota_{(V, \alpha, \chi)}$ be the interpretation of well-formed formulas and well-formed terms indicated in the following table:
\smallskip

{{\centering
\begin{tabular}{|c@{\hskip 2mm}| c@{\hskip 2mm}| |c@{\hskip 2mm}|c@{\hskip 2mm} |}
\hline
$a :  A$ & $\alpha(a)\in \val{A}_V$ &
$x :: A$ & $\chi(x)  \in \descr{A}_V$ \\
\hline
$a R_\Box x$ & $\alpha(a)R_\Box \chi(x)$ &
$a R_\blacksquare x$ & $\alpha(a)R_\blacksquare \chi(x)$ \\
\hline
$x R_\Diamond a$ & $\chi(x)R_\Diamond \alpha(a)$ &
$x R_\Diamondblack a$ & $\chi(x)R_\Diamondblack \alpha(a)$ \\
\hline
$a R_\rhd b$ & $\alpha(a)R_\Diamond \alpha(b)$ &
$a R_\blacktriangleright b$ & $\alpha(a)R_\Diamondblack \alpha(b)$ \\
\hline
$aIx$ & $\alpha(a)I\chi(x)$ &
$t_1(u) \Rightarrow t_2(u)$ & $\forall u(\iota_{(V, \alpha, \chi)}(t_1(u)) \Rightarrow \iota_{(V, \alpha, \chi)}(t_2(u))) $  \\
\hline

\end{tabular}
\par}}
\smallskip

\noindent
Under this interpretation, sequents   $\Gamma\vdash \Delta$ are interpreted as follows\footnote{The symbols $\with$ and $\parr$ denotes a meta-linguistic conjunction and a disjunction, respectively.}:  \[\forall V \forall \alpha \forall \chi(\bigwith_{\gamma \in \Gamma} \iota_{(V, \alpha, \chi)}(\gamma)   \Longrightarrow  \bigparr_{\delta \in \Delta} \iota_{(V, \alpha, \chi)}(\delta)).\]

In the following, we show the soundness of the interdefinability rules in \textbf{R.L}$\rho$, being the proof of soundness of the (pure structure) switch rules similar. The soundness of the rules for the basic calculus \text{R.L} is proved in Appendix \ref{appendix:soundbase}.

\begin{remark}
\label{remark:technical_approxm}
Given a polarity $\mathbb{P} = (A, X, I)$, $c \in \mathbb{P}^+$, and $B\subseteq A$, the condition
\smallskip

{{\centering 
$(\forall x \in X)(c \subseteq I^{(0)}[x] \Rightarrow B \subseteq I^{(0)}[x])$,
\par}}
\smallskip

\noindent can be rewritten using the defining properties of $\bigcap$ as the inclusion
\smallskip

{{\centering 
$B \subseteq \bigcap \left\{ I^{(0)}[x] \mid x \in X, c \subseteq I^{(0)}[x] \right\}$,
\par}}
\smallskip

\noindent which, by Lemma \ref{lemma:s_as_i_e}, is equivalent to $B \subseteq c$.
\end{remark}

\begin{lemma}
\label{lemma:soundnessextrarules}
The rules swSf, swSfi, swSdf, swSdfi, swES, swESi, curryS, uncurryS, refl, sym, and trans are sound with respect to the class of rough formal contexts.
\end{lemma}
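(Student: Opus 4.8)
The plan is to verify soundness rule by rule, exploiting the interpretation table for labelled formulas and terms together with the algebraic facts collected in Lemma~\ref{lemma: basic}, Lemma~\ref{lemma:s_as_i_e}, and Lemma~\ref{thm:equivalent_s_rewriting}. The key observation, made precise in Remark~\ref{remark:technical_approxm}, is that a universally quantified relational term of the shape $b\,T\,x \Rightarrow b\,I\,y$ unpacks, under $\iota_{(V,\alpha,\chi)}$, into an inclusion between a round-bracket set $T^{(0)}[\chi(x)]$ (or a Galois closure thereof) and $I^{(0)}[\chi(y)]$; since the sets $\descr{A}_V$ occurring on the right of a sequent are Galois-stable, the "approximation"-style rewriting of Remark~\ref{remark:technical_approxm} lets one pass freely between "$\chi(x)\in\descr{A}_V$" and "$(\forall\,b)(b\in\val{A}_V \Rightarrow b\,I\,\chi(x))$", and dually on the object side. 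I would set up this dictionary once at the start and then apply it uniformly.

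First I would treat the four interdefinability pairs swSf/swSfi, swSdf/swSdfi, swES/swESi, curryS/uncurryS. For each, note that the rule and its inverse have the same premise and conclusion read bottom-up and top-down, so it suffices to show the interpretations of premise and conclusion are \emph{equivalent} (not merely that one implies the other); this simultaneously handles both directions. For swSf/swSfi: the conclusion says $\alpha(a)\in\val{A}_V \Rightarrow \alpha(a)\,\kentsbox\,\chi(x)$ (for all $V,\alpha,\chi$), equivalently $\val{A}_V \subseteq \kentsbox^{(0)}[\chi(x)]$; since $\kentsbox$ is $I$-compatible this set is Galois-stable and by Lemma~\ref{lemma: basic} equals $I^{(0)}[I^{(1)}[\kentsbox^{(0)}[\chi(x)]]]$, i.e.\ $\bigcap\{I^{(0)}[y] \mid \kentsbox^{(0)}[\chi(x)]\subseteq I^{(0)}[y]\}$, which is exactly what the premise $\forall b(b\,\kentsbox\,x \Rightarrow b\,I\,y) \Rightarrow (\chi(y)\in\descr{A}_V)$ says after the Remark~\ref{remark:technical_approxm} rewriting and the fact that $\descr{A}_V=\val{A}_V^{\uparrow}$. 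The swES/swESi pair is the analogous computation with $\kentsdiamond$ in place of the right-hand relation, using Lemma~\ref{lemma:s_as_i_e} to identify $\kentsdiamond = J;E$ and hence $\kentsdiamond^{(0)}[\alpha(a)] = I^{(1)}[E^{(0)}[\alpha(a)]]$ after the appropriate $J$/$I$ bookkeeping; swSdf/swSdfi is the object-side mirror. For curryS/uncurryS, the equivalence is literally currying: $\forall b(b\,E\,a \Rightarrow b\,I\,x)$ is by definition $a\,\kentsbox\,x$, so the premise and conclusion denote the same condition, and the eigenvariable side condition on $b$ guarantees the universal quantifier is correctly introduced.

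Next the three rules for equivalence relations. These are sound precisely because in a rough formal context the relation $E$ \emph{is} reflexive, symmetric and transitive by definition. For \textbf{refl}: if the premise holds for all $V,\alpha,\chi$, then in particular it holds for those $\alpha$ — but the added antecedent $a\,E\,a$ is interpreted as $\alpha(a)\,E\,\alpha(a)$, which is always true by reflexivity of $E$, so it can be deleted without changing truth value. For \textbf{sym} and \textbf{trans}: moving $a\,E\,b$ (resp.\ $a\,E\,c$) between premise and conclusion is justified because $\alpha(b)\,E\,\alpha(a) \Leftrightarrow \alpha(a)\,E\,\alpha(b)$ and because $\alpha(a)\,E\,\alpha(c)$ follows from $\alpha(a)\,E\,\alpha(b)$ and $\alpha(b)\,E\,\alpha(c)$; as these appear in succedent position, the rule is a sound weakening/contraction-style manipulation of a disjunction of metalinguistic statements, using transitivity to absorb two disjuncts into one.

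The main obstacle I anticipate is purely bookkeeping: getting the round-bracket gymnastics exactly right across the $I$/$J$ and $A$-side/$X$-side dualities, in particular keeping track of when a Galois closure is silently inserted (it is harmless precisely when the set sits opposite a stable set, i.e.\ opposite some $\descr{A}_V$ or $\val{A}_V$, which is always the case in these rules by the shape of the sequents) and making sure the eigenvariable side conditions are invoked wherever a fresh label is universally generalized. There is no conceptual difficulty beyond Lemmas~\ref{lemma: basic}, \ref{lemma:s_as_i_e} and \ref{thm:equivalent_s_rewriting} plus Remark~\ref{remark:technical_approxm}; the work is in presenting the six interdefinability equivalences compactly enough that the reader is not buried in near-identical displayed calculations — I would prove swSf/swSfi in full as the template and then indicate the others by "reasoning analogously, using Lemma~\ref{lemma:s_as_i_e}" with only the differing step spelled out.
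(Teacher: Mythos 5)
Your treatment of the interdefinability rules follows essentially the same route as the paper's proof: unfold the interpretations of premise and conclusion, use the Galois-stability of $\val{A}_V$ and $\descr{A}_V$ and the $I$-compatibility of $\kentsbox$ and $E$ together with Remark \ref{remark:technical_approxm} and Lemma \ref{lemma:s_as_i_e} to turn the quantified term $\forall b(b\kentsbox x \Rightarrow bIy)$ into an inclusion, observe that every step is an equivalence (so each rule and its inverse are handled at once), prove swSf/swSfi in full and dispatch the remaining pairs by analogy. That part, as well as refl and sym, is fine and matches the paper.

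The step that fails is your justification of trans. You argue that, since $aEb$ and $bEc$ occur in succedent position, transitivity lets you ``absorb two disjuncts into one''; but under the stated interpretation of sequents the succedent is a metalinguistic disjunction, and $\alpha(a)E\alpha(b) \parr \alpha(b)E\alpha(c)$ does not yield $\alpha(a)E\alpha(c)$ --- transitivity needs the \emph{conjunction} of the two atoms. Indeed, with the disjunctive reading the displayed rule is not even validity-preserving: take $\Gamma=\{aEb\}$ and $\Delta=\varnothing$; the premise $aEb \vd aEb, bEc$ is valid, while the conclusion $aEb \vd aEc$ fails whenever $E$ has more than one equivalence class. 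The sound content is the antecedent-side (equivalently, conjunctive) reading, which is how the rule is actually used in the paper's derivation of $\kbox p \vd \kbox\kbox p$: there $bEa$ in the antecedent is replaced by $bEc, cEa$, justified precisely because $bEc \with cEa$ implies $bEa$. The paper's own proof is silent on this point (it only says refl, sym, trans follow from $E$ being an equivalence relation), so your write-up should adopt the antecedent/conjunctive reading of trans explicitly rather than the disjunct-absorption argument, which as stated is incorrect.
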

\begin{proof}
Under the assumption that $E$ and $\kentsbox$ are $I$-compatible, all the formulae are interpreted as concepts. In what follows, we will refer to the objects (resp.\ features) occurring in $\Gamma$ and $\Delta$ in the various rules with $\overline d$ (resp.\ $\overline w$). For the sake of readability, in what follows we omit an explicit reference to the interpretation maps $\alpha$ and $\chi$.

\noindent (swSf and swSfi) 
\smallskip

{{\small\centering 
\begin{tabular}{rll}
     & $\forall V\forall \overline d \forall \overline w\forall x \forall y\left(\bigwith \Gamma \with \forall b(b \kentsbox x \Rightarrow bIy) \Rightarrow y \in \descr{A}_V \parr \bigparr \Delta  \right)$ \\
     iff &  $\forall V\forall \overline d \forall \overline w\forall x \forall y\left(\bigwith \Gamma \with \kentsbox^{(0)}[x] \subseteq I^{(0)}[y] \Rightarrow y \in \descr{A}_V \parr \bigparr \Delta  \right)$ & \ \  Def.~of $(\cdot)^{(0)}$ \ \\
     iff &  $\forall V\forall \overline d \forall \overline w\forall x \forall y\left(\bigwith \Gamma \with \kentsbox^{(0)}[x] \subseteq I^{(0)}[y] \Rightarrow \val{A}_V \subseteq I^{(0)}[y] \parr \bigparr \Delta  \right)$  & \ \ $V(A)$ closed\\
     iff &  $\forall V\forall \overline d \forall \overline w\forall x \left(\bigwith \Gamma \Rightarrow  \forall y\left(\kentsbox^{(0)}[x] \subseteq I^{(0)}[y] \Rightarrow \val{A}_V \subseteq I^{(0)}[y]\right) \parr \bigparr \Delta  \right)$  & \ \ uncurrying + side\\
     iff &  $\forall V\forall \overline d \forall \overline w\forall x \left(\bigwith \Gamma \Rightarrow  \val{A}_V \subseteq \kentsbox^{(0)}[x] \parr \bigparr \Delta  \right)$  & \ \ $S$ $I$-comp, Remark \ref{remark:technical_approxm}\\
     iff &  $\forall V\forall \overline d \forall \overline w\forall x \left(\bigwith \Gamma \Rightarrow  \forall a\left(a \in \val{A}_V \Rightarrow a\in \kentsbox^{(0)}[x]\right) \parr \bigparr \Delta  \right)$  & \ \ Def.~of $\subseteq$\\
     iff &  $\forall V\forall \overline d \forall \overline w\forall x \left(\bigwith \Gamma \with a \in \val{A}_V \Rightarrow a\in \kentsbox^{(0)}[x] \parr \bigparr \Delta  \right)$  & \ \ currying\\
     iff &  $\forall V\forall \overline d \forall \overline w\forall x \left(\bigwith \Gamma \with a \in \val{A}_V \Rightarrow a  \kentsbox x \parr \bigparr \Delta  \right)$  & \ \ Def.~of $(\cdot)^{(0)}$\\
\end{tabular}
\par}}
\smallskip

\noindent (swSdf and swSdfi)
\smallskip

{{\small\centering 
\begin{tabular}{rll}
& $\forall V\forall \overline d \forall \overline w\forall a \forall x\left(\bigwith \Gamma \with x \in \descr{A}_V \Rightarrow x \kentsdiamond a \parr \bigparr \Delta  \right)$ \\
iff & $\forall V\forall \overline d \forall \overline w\forall a \forall x\left(\bigwith \Gamma \with \val{A}_V \subseteq I^{(0)}[x] \Rightarrow x \in \kentsdiamond^{(0)}[a] \parr \bigparr \Delta  \right)$ & \ \ $V(A)$ closed \\
iff & $\forall V\forall \overline d \forall \overline w\forall a \forall x\left(\bigwith \Gamma \with \val{A}_V \subseteq I^{(0)}[x] \Rightarrow I^{(0)}[\kentsdiamond^{(0)}[a]] \subseteq I^{(0)}[x] \parr \bigparr \Delta  \right)$ & \ \  $S$ is $I$-compatible \\
iff & \multicolumn{2}{l}{\begin{tabular*}{\textwidth}{@{}l@{\extracolsep{\fill}}r@{}}
     $\forall V\forall \overline d \forall \overline w\forall a \left(\bigwith \Gamma \Rightarrow I^{(0)}[\kentsdiamond^{(0)}[a]] \subseteq \val{A}_V \parr \bigparr \Delta  \right)$ & \ \ \ $V(A)$ closed, Remark \ref{remark:technical_approxm} 
\end{tabular*}}\\
iff & $\forall V\forall \overline d \forall \overline w\forall a\forall b \left(\bigwith \Gamma \with b \in I^{(0)}[\kentsdiamond^{(0)}[a]] \Rightarrow b \in \val{A}_V \parr \bigparr \Delta  \right)$ & \ \ Def.\ of $\subseteq$ \\
iff & $\forall V\forall \overline d \forall \overline w\forall a\forall b \left(\bigwith \Gamma \with b \in I^{(0)}[J^{(0)}[E^{(0)}[a]]] \Rightarrow b \in \val{A}_V \parr \bigparr \Delta  \right)$ & \ \ Remark \ref{lemma:s_as_i_e}\\
iff & $\forall V\forall \overline d \forall \overline w\forall a\forall b \left(\bigwith \Gamma \with b \in I^{(0)}[I^{(1)}[E^{(0)}[a]]] \Rightarrow b \in \val{A}_V \parr \bigparr \Delta  \right)$ & \ \ Def.~of $J$\\
iff & $\forall V\forall \overline d \forall \overline w\forall a\forall b \left(\bigwith \Gamma \with b \in E^{(0)}[a] \Rightarrow b \in \val{A}_V \parr \bigparr \Delta  \right)$ & \ \ $E$ is $I$-compatible \\
iff & $\forall V\forall \overline d \forall \overline w\forall a\forall b \left(\bigwith \Gamma \with bEa \Rightarrow b \in \val{A}_V \parr \bigparr \Delta  \right)$ & \ \ Def.~of $(\cdot)^{(0)}$\\
\end{tabular}
\par}}
\smallskip

\noindent (curryS and uncurryS)
\smallskip

{{\small\centering 
\begin{tabular}{rll}
& $\forall V\forall \overline d \forall \overline w\forall a \forall x\left(\bigwith \Gamma \Rightarrow a \kentsbox x \parr \bigparr \Delta  \right)$ \\
iff & $\forall V\forall \overline d \forall \overline w\forall a \forall x\left(\bigwith \Gamma \Rightarrow \forall b(bEa \Rightarrow b I x) \parr \bigparr \Delta  \right)$ & \ \ \ Def.~of $\kentsbox$\\
iff & $\forall V\forall \overline d \forall \overline w\forall a \forall x\forall b\left(\bigwith \Gamma \Rightarrow (bEa \Rightarrow b I x \parr \bigparr \Delta  )\right)$ & \ \ \  side condition\\
iff & $\forall V\forall \overline d \forall \overline w\forall a \forall x\forall b\left(\bigwith \Gamma \with bEa \Rightarrow b I x \parr \bigparr \Delta  \right)$ & \ \ \ currying\\
\end{tabular}
\par}}
\smallskip

\noindent (swES and swESi) The proof is similar to the previous ones.
The soundness of rules refl, sym, and trans follows from the fact that relation $E$ is equivalence relation in a rough formal context. 
\end{proof}

\begin{remark}
\label{remark:switchicomp}
The soundness of the switch rules is proved exactly as the soundness of the interdefinability rules in Lemma \ref{lemma:soundnessextrarules} by the $I$-compatibility of the relations in enriched formal contexts. More in general, these rules encode {\em exactly} the $I$-compatibility of such relations. Let us show this for $R_\Box$, as the others are proved similarly. One of the two $I$-compatibility conditions can be rewritten as
\smallskip

{\small
{{\centering
\begin{tabular}{rlr}
& $I^{(0)}[I^{(1)}[R_\Box^{(0)}[x]]] \subseteq R_\Box^{(0)}[x]$ & \\
iff & $\forall y(y \in I^{(1)}[R_\Box^{(0)}[x]] \Rightarrow aIy)\Rightarrow aR_\Box x$ & Def.\ of $I^{(0)}[\cdot]$\\
iff & $\forall y(\forall b(b R_\Box x \Rightarrow bIy) \Rightarrow aIy)\Rightarrow aR_\Box x$ & Def.\ of $I^{(1)}[\cdot]$\\
\end{tabular}
\par}}
 }

\smallskip

\noindent In what follows we are not assuming that $R_\Box$ is $I$-compatible; hence the valuation of an arbitrary formula does not need to be closed, but rather just a pair containing an arbitrary set of objects and its intension, or a an arbitrary set of features and its extension.  Ignoring the contexts for readability, the rule $S_{x\Box a}$ is interpreted as
\smallskip

{\small
{{\centering
\begin{tabular}{rlr}
& $\forall V, a, x \left(\forall y\left( \forall b(b R_\Box x \Rightarrow bIy) \Rightarrow y \in \descr{A}_V \right) \Longrightarrow (a \in \val{A}_V \Rightarrow a R_\Box x) \right)$ & \\
iff & $\forall V, a, x \left(\forall y\left( \forall b(b R_\Box x \Rightarrow bIy) \Rightarrow y \in \descr{A}_V \right) \Longrightarrow (\val{A}_V \subseteq R_\Box^{(0)}[x])\right)$ & Def.\ of $R_\Box^{(0)}[\cdot]$\\
iff & $\forall V, a, x \left(\forall y\left( y \in I^{(1)}[R_\Box^{(0)}[x]]  \Rightarrow y \in \descr{A}_V \right) \Longrightarrow (\val{A}_V \subseteq R_\Box^{(0)}[x])\right)$ & Def.\ of $I^{(1)}[\cdot]$\\
iff & $\forall V, a, x \left(\forall y\left( y \in I^{(1)}[R_\Box^{(0)}[x]]  \Rightarrow y \in \descr{A}_V \right) \Longrightarrow (\val{A}_V \subseteq R_\Box^{(0)}[x])\right)$ & Def.\ of $I^{(1)}[\cdot]$\\
implies & $\forall V, a, x \left(\val{A}_V \subseteq I^{(0)}[I^{(1)}[R_\Box^{(0)}[x]]] \Longrightarrow (\val{A}_V \subseteq R_\Box^{(0)}[x])\right)$ & $I^{(0)}[\cdot])$ antitone\footnote{And also $\val{A}_V\subseteq I^{0}[\descr{A}_V]$ holds in both the cases: the one where $\val{A}$ is the extension of an arbitrary set of features, and when $\descr{A}$ is the intension of $\val{A}$.  }\\
iff & $\forall V, a, x \left(I^{(0)}[I^{(1)}[R_\Box^{(0)}[x]]] \subseteq R_\Box^{(0)}[x]\right)$ & $I^{(0)}[\cdot])$ Def.\ of $\subseteq$
\end{tabular}
\par}}
}
\smallskip 

\noindent The second $I$-compatibility condition for $R_\Box$ is proved similarly using $S_{a\Box x}$.
\end{remark}

\subsection{Syntactic completeness of the basic calculus and its axiomatic extensions}

In the present section, we show that the axioms and rules of $\mathbf{R.L}\Sigma$, where $\Sigma$ is a subset of the set of axioms in Table \ref{axiom table}, are derivable in $\mathbf{R.L}$ extended with the corresponding rules. The axioms and rules of the basic logic $\mathbf{L}$ and some of its axiomatic extensions are discussed in Appendix \ref{appendix_completeness}.
Below, we show how the  axioms $\kbox p \vd p$, $\kbox p \vd \kbox \kbox p$, and  $ p \vd {\rhd}{\rhd} p$ can be derived using rules refl, sym, and trans respectively.

{\small
\begin{center}
\begin{tabular}{@{}c@{}c@{}c@{}}
\bottomAlignProof
\AXC{$x:: p \vd x:: p$}
\LL{$\Box_L$}
\UIC{$b : \kbox p, x :: p \vd b\kentsbox  x$}
\LL{curry}
\UIC{$b : \kbox p, x :: p, bEb\vd bIx$}
\LL{refl}
\UIC{$b : \kbox p, x :: p \vd bIx$}
\LL{approx$_x$}
\UIC{$b : \kbox p \vd b : p $}
\DP
 & 
\bottomAlignProof
\AXC{$x :: p \vd x :: p$}
\RL{$\Box_L$}
\UIC{$a:\Box p, x :: p \vd a \kentsbox x$}
\RL{curry}
\UIC{$a : \Box p, x :: p, bEa \vd bIx$}
\RL{trans}
\UIC{$a : \Box p, x :: p, bEc, c E a \vd bIx$}
\RL{uncurry}
\UIC{$a : \Box p, x::p, c E a \vd c \kentsbox x$}
\RL{$\Box_R$}
\UIC{$a: \kbox p, c E a \vd c: \Box p$}
\RL{swSdfi}
\UIC{$a : \kbox p, y :: \kbox p, \vd y \kentsdiamond a$}
\RL{$\Box_R$}
\UIC{$a: \kbox p \vd a: \kbox\kbox p$}
\DP
 & 
\bottomAlignProof
\AXC{$b : {\rhd} p, a:p \vd  a:p, b R_\rhd a$}
\RL{$\rhd_L$}
\UIC{$b : {\rhd} p, a:p \vd b R_\rhd a$}
\RL{sym}
\UIC{$b : {\rhd} p, a:p \vd a R_\rhd b$}
\RL{$\rhd_R$}
\UIC{$a: p \vd a: {\rhd}{\rhd}p$}
\DP
 \\
\end{tabular}
\end{center}
}
\section{Conclusions} \label{sec:Conclusions}
In the present paper, we have introduced labelled calculi for a finite set of non-distributive modal logics in a modular way, and we have shown that the calculus associated with each such logic is sound w.r.t.~the relational semantics of that logic given by elementary classes of enriched formal contexts, and syntactically complete w.r.t.~the given logic. These results showcase that  the methodology introduced in \cite{negri2005proof} for introducing labelled calculi by suitably integrating semantic information in the design of the rules can be extended from classical modal logics to the wider class of non-distributive logics. This methodology has proved successful for designing calculi for classical modal logics enjoying excellent computational properties, such as cut elimination, subformula property, being contraction-free, and being suitable for proof-search. Future developments of this work include the proofs of these results for the calculi introduced in the present paper.  
\begin{appendix}
\section{Soundness of the basic calculus}
\label{appendix:soundbase}
\begin{lemma}
The basic calculus $\mathbf{R.L}$ is sound  for the logic of enriched formal contexts.
\end{lemma}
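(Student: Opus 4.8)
The plan is to prove soundness by induction on the height of $\mathbf{R.L}$-derivations: for every derivable sequent $\Gamma\vdash\Delta$, every enriched formal $\mathcal{L}$-context $\mathbb{F}=(\mathbb{P},R_\Box,R_\Diamond,R_\rhd)$, and every interpretation $\iota_{(V,\alpha,\chi)}$ as in Section~\ref{ssec:soundness}, if all members of $\Gamma$ are satisfied then so is some member of $\Delta$. For the base case one unfolds the interpretation of the four initial rules: $\mathrm{Id}_{a:p}$ and $\mathrm{Id}_{x::p}$ are immediate (from $\alpha(a)\in\val{p}_V$ conclude $\alpha(a)\in\val{p}_V$, dually for features); $\bot$ holds because $\iota_{(V,\alpha,\chi)}(x::\bot)$ is $\chi(x)\in\descr{\bot}_V=X$ by the refutation clause for $\bot$ in Section~\ref{ssec:relsem}; and $\top$ holds because $\iota_{(V,\alpha,\chi)}(a:\top)$ is $\alpha(a)\in\val{\top}_V=A$.

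For the inductive step one checks that each rule preserves validity, group by group. The cut rules $\mathrm{Cut}_{aa}$, $\mathrm{Cut}_{xx}$ are handled by the familiar argument: fixing an interpretation making all of $\Gamma,\Gamma'$ true, the left premise yields either $a:A$ (resp.\ $x::A$) or a member of $\Delta$, and in the first case the right premise yields a member of $\Delta'$. The invertible logical rules for $\wedge,\vee$ and the modal rules for $\Box,\Diamond,\rhd$ are read-offs of the satisfaction and refutation clauses of Section~\ref{ssec:relsem}: e.g.\ $\wedge_L,\wedge_R$ use $\val{A\wedge B}_V=\val{A}_V\cap\val{B}_V$, while $\Box_R$ and $\Box_L$ use $\val{\Box A}_V=R_\Box^{(0)}[\descr{A}_V]$, i.e.\ $\alpha(a)\in\val{\Box A}_V$ iff $\alpha(a)R_\Box v$ for every $v\in\descr{A}_V$; for $\Box_R$ (and dually $\Diamond_L$, $\rhd_R$) the eigenvariable side condition is precisely what promotes ``the premise holds under every $\chi$ (resp.\ $\alpha$)'' to the universal quantifier appearing in that clause. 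The adjunction rules are sound essentially by definition, since by the definitions of $R_\blacksquare,R_\Diamondblack,R_\blacktriangleright$ recalled before Proposition~\ref{lemma:correspondences}, the paired atoms $aR_\blacksquare x$ and $xR_\Diamond a$ (resp.\ $xR_\Diamondblack a$ and $aR_\Box x$, resp.\ $aR_\blacktriangleright b$ and $bR_\rhd a$) receive the same interpretation under $\iota_{(V,\alpha,\chi)}$.

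The cases that carry real content are the switch rules, the approximation rules, and the pure-structure switch rules, for which the argument is the template already displayed in Lemma~\ref{lemma:soundnessextrarules} and Remark~\ref{remark:switchicomp}. Two facts are used throughout: in an enriched formal context every formula is interpreted as a concept, so $\val{A}_V=I^{(0)}[\descr{A}_V]$ and $\descr{A}_V=I^{(1)}[\val{A}_V]$; and the relations $R_\Box,R_\Diamond,R_\rhd$, hence also $R_\blacksquare,R_\Diamondblack,R_\blacktriangleright$, are $I$-compatible. Combined with the monotonicity and Galois identities of Lemma~\ref{lemma: basic}, the reformulation in Remark~\ref{remark:technical_approxm}, and routine currying/uncurrying around the fresh label, these reduce each such rule to a short chain of equivalences. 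For instance $\mathrm{S}xa$ is sound because its premise, valid under every instantiation of the eigenvariable $x\notin\Gamma,\Delta$, expresses $\descr{B}_V\subseteq\descr{A}_V$, whence $\val{A}_V=I^{(0)}[\descr{A}_V]\subseteq I^{(0)}[\descr{B}_V]=\val{B}_V$ by Lemma~\ref{lemma: basic}.1; the approximation rules $approx_x$, $approx_a$ are the read-off of Remark~\ref{remark:technical_approxm} with $c=\val{A}_V$; and the pure-structure switch rules, together with the $\mathrm{Id}(I;J)$ and $\mathrm{Id}(J;I)$ rules, unwind the definition of $I$-composition of relations in the same style via Lemma~\ref{lemma: basic}.2--4. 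The main obstacle is thus not conceptual but organizational: there are many rules, and one must invoke $I$-compatibility and closedness of the valuations at exactly the points where the operators $(\cdot)^{(0)}$ and $(\cdot)^{(1)}$ are introduced or eliminated; once the verification pattern of Lemma~\ref{lemma:soundnessextrarules} is fixed, every remaining case is a routine instance of it.
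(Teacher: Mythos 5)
Your proposal is correct and follows essentially the same route as the paper's proof: a rule-by-rule verification under the interpretation $\iota_{(V,\alpha,\chi)}$, with the initial, cut, propositional and adjunction rules handled directly, the logical modal rules read off from the satisfaction/refutation clauses using the eigenvariable conditions, and the switch and approximation rules reduced to chains of equivalences exploiting closedness of valuations, $I$-compatibility, and the Galois properties of Lemma \ref{lemma: basic} (the paper likewise only works out representative cases, e.g.\ $approx_a$, $\Box_L$, $\Box_R$ and S$a\Diamond x$, exactly in the style you describe). No gap to report.
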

\begin{proof}
The soundness of the axioms, cut rules and propositional rules is trivial from the definitions of satisfaction and refutation relation for enriched formal contexts. 
We now discuss the soundness for the other rules.

\noindent \textbf{Adjunction rules}. The soundness of the adjunction rules follows from the fact that $R_\bbox = R_\wdia^{-1}$, $R_\bdia = R_\wbox^{-1}$ and $R_\rhd = R_\blacktriangleright^{-1}$. 

\noindent \textbf{Approximation rules}. We only give proof for $approx_a$. The proof for $approx_x$ is similar. In what follows, we will refer to the objects (resp.\ features) occurring in $\Gamma$ and $\Delta$ in the various rules with $\overline d$ (resp.\ $\overline w$).

{\small
\begin{center}
\begin{tabular}{rll}
     & $\forall V\forall \overline d \forall \overline w\forall a \forall x\left(\bigwith \Gamma \with x \succ A  \Rightarrow a I x \parr \bigparr \Delta  \right)$ \\
      iff  & $\forall V\forall \overline d \forall \overline w\forall a \left(\bigwith \Gamma \with \forall x (x \succ A  \Rightarrow a I x) \parr \bigparr \Delta  \right)$  & \ \ \ $x$ does not appear in $\Gamma$ or $\Delta$ \\
     iff &  $\forall V\forall \overline d \forall \overline w\forall a \forall x\left(\bigwith \Gamma \with x \in  \descr{V(A)}  \Rightarrow a I x \parr \bigparr \Delta  \right)$ \\
      iff &  $\forall V\forall \overline d \forall \overline w\forall a \forall x\left(\bigwith \Gamma \with a \in  I^{(0)}\descr{V(A)}  \parr \bigparr \Delta  \right)$ & \ \ \ Def. of $(\cdot)^{(0)}$\\
      iff &  $\forall V\forall \overline d \forall \overline w\forall a \forall x\left(\bigwith \Gamma \with a \in \val{V(A)}  \parr \bigparr \Delta  \right)$ & \ \ \ $V(A)$ is closed\\
      iff &  $\forall V\forall \overline d \forall \overline w\forall a \forall x\left(\bigwith \Gamma \with a  \Vdash A  \parr \bigparr \Delta  \right)$\
\end{tabular}
\end{center}
}

\noindent \textbf{Invertible  rules for modal connectives}. We only give proofs for $\wbox_L$ and $\wbox_R$. The proofs for  $\wdia_R$, $\wdia_L$, $\rhd_R$, and $\rhd_L$ can be given in a similar manner.

{\small
\begin{center} 
\begin{tabular}{rll}
     & $\forall V\forall \overline d \forall \overline w\forall x \forall y\left(\bigwith \Gamma \with a \Vdash \wbox A \Rightarrow x \succ A \parr a R_\wbox x \parr \bigparr \Delta  \right)$ \\
     implies &  $\forall V\forall \overline d \forall \overline w\forall x \forall y\left(\bigwith \Gamma \with a \Vdash \wbox A \Rightarrow \forall b (b \vdash \wbox A \Rightarrow b R_\wbox x) \parr a R_\wbox x \parr \bigparr \Delta  \right)$ & \ \ \   Def. of $\wbox$  \\ 
     implies &  $\forall V\forall \overline d \forall \overline w\forall x \forall y\left(\bigwith \Gamma \with a \Vdash \wbox A \Rightarrow  a R_\wbox x \parr \bigparr \Delta  \right)$   \\ 
\end{tabular}
\end{center}
 }

The invertibility of the rule $\wbox_L$  is obvious from the fact that the premise can be obtained from the conclusion by weakening.

{\small
\begin{center}
\begin{tabular}{rlr}
& $\forall V\forall \overline d \forall \overline w\forall a \forall x\left(\bigwith \Gamma \with x \succ A \Rightarrow a R_\wbox x \parr \bigparr \Delta  \right)$ \\
iff & $\forall V\forall \overline d \forall \overline w\forall a\left(\bigwith \Gamma \with  \forall x(x \succ A \Rightarrow a R_\wbox x) \parr \bigparr \Delta  \right)$  & \quad $x$ does not appear in $\Gamma$ or $\Delta$\\
iff & $\forall V\forall \overline d \forall \overline w\forall a\left(\bigwith \Gamma \with  \Rightarrow a \Vdash \wbox A \parr \bigparr \Delta  \right)$  & $x$ Def. of $\wbox$\\
\end{tabular}
\end{center}
 }

\noindent \textbf{Switch rules.} Soundness of the rules S$xa$ and S$ax$ follows from the fact that for any concepts $c_1$ and $c_2$ we have 
\begin{align*} 
\val{c_1} \subseteq \val{c_2} \quad \iff \quad \descr{c_2} \subseteq \descr{c_1}. 
\end{align*}
The soundness of all other switch rules follows from the definition of modal connectives and I-compatibility. As all the proofs are similar we only prove the soundness of S$a \wdia x$ as a representative case. Soundness of other rules can be proved in an analogous manner. 

{\small
\begin{center}
\begin{tabular}{rll}
     & $\forall V\forall \overline d \forall \overline w\forall a \forall b \left(\bigwith \Gamma \with \forall y (y R_\wdia a \Rightarrow b I y) \Rightarrow b \Vdash A \parr \bigparr \Delta  \right)$ \\
      iff & $\forall V\forall \overline d \forall \overline w\forall a \forall b\left(\bigwith \Gamma \with  b \in I^{(0)}[R_\wdia^{(0)}[a]]  \Rightarrow b \Vdash A \parr \bigparr \Delta  \right)$& \ \ \  Def. of $R_\wdia^{(0)}$ and $I^{(0)}$ \\
      iff & $\forall V\forall \overline d \forall \overline w\forall a\left(\bigwith \Gamma \Rightarrow   \forall b (b \in I^{(0)}[R_\wdia^{(0)}[a]]  \Rightarrow b \Vdash A)  \parr \bigparr \Delta  \right)$& \ \ \  $b$ does not appear in $\Gamma$ or $\Delta$   \\
    iff & $\forall V\forall \overline d \forall \overline w\forall a\left(\bigwith \Gamma \Rightarrow   I^{(0)}[R_\wdia^{(0)}[a]]  \subseteq \val{V(A)}\parr  \bigparr \Delta  \right)$& \ \ \  $b$ does not appear in $\Gamma$ or $\Delta$   \\
    iff & $\forall V\forall \overline d \forall \overline w\forall a\left(\bigwith \Gamma  \Rightarrow  I^{(1)}[\val{V(A)}] \subseteq I^{(1)}[ I^{(0)}[R_\wdia^{(0)}[a]]] \parr \bigparr \Delta  \right)$& \ \ \  $I^{(1)}$ is antitone and  $\val{V(A)}$ is closed \\
    iff & $\forall V\forall \overline d \forall \overline w\forall a\left(\bigwith \Gamma \Rightarrow I^{(1)}[\val{V(A)}] \subseteq R_\wdia^{(0)}[a] \parr  \bigparr \Delta  \right)$& \ \ \  $R_\wbox$ is I-compatible\\
      iff & $\forall V\forall \overline d \forall \overline w\forall a\left(\bigwith \Gamma \Rightarrow  \forall x (x \in I^{(1)}[\val{V(A)}] \Rightarrow x \in R_\wdia^{(0)}[a])  \parr \bigparr \Delta  \right)$\\
      implies  & $\forall V\forall \overline d \forall \overline w\forall a\forall x \left(\bigwith \Gamma \with  x \in I^{(1)}[\val{V(A)}] \Rightarrow x \in R_\wdia^{(0)}[a])  \parr \bigparr \Delta  \right)$\\
      iff &  $\forall V\forall \overline d \forall \overline w\forall a\forall x \left(\bigwith \Gamma \with  x\succ A  \Rightarrow x  R_\wdia a \parr \bigparr \Delta  \right)$&\ \ \  Def.~of $R_\wdia^{(0)}$\\
      \smallskip
\end{tabular}
\end{center}
}

Soundness of the axiomatic extensions considered in Section \ref{ssec: Relational calculus extension} is immediate from the Proposition \ref{lemma:correspondences}. 
\end{proof}

\section{Syntactic completeness} \label{appendix_completeness}
 
As to the axioms and rules of the basic logic $\mathbf{L}$, below, we only derive in $\mathbf{R.L}$ the axioms and rules encoding the fact that $\wdia$ is a normal modal operator plus the axiom $p \vd p \vee q$.   
{\small
\begin{center}
\begin{tabular}{cc}
\AXC{ \ }
\LL{\fns Id$_{b:A}$}
\UIC{$x:: \wdia A, x:: \wdia B, b: A \vd b: A, x\RWD b$}
\LL{$\wdia_R$}
\UIC{$x:: \wdia A, x:: \wdia B, b: A \vd x\RWD b$}
\LL{$\vee_R$}
\UIC{$x:: \wdia A \vee \wdia B, b: A \vd x\RWD b$}
\LL{\fns $\wdia \dashv \bbox$}
\UIC{$x:: \wdia A \vee \wdia B, b: A \vd b\RBB x$}
\LL{\fns S$ x\bbox a^{c} $ }
\UIC{$x:: \wdia A \vee \wdia B,  a\RBB x \Rarr aIy\vd y: A$}
\AXC{ \ }
\RL{\fns Id$_{b:B}$}
\UIC{$x:: \wdia A, x:: \wdia B, b: B \vd b: B, x\RWD b$}
\RL{$\wdia_R$}
\UIC{$x:: \wdia A, x:: \wdia B, b: B \vd x\RWD b$}
\RL{$\vee_R$}
\UIC{$x:: \wdia A \vee \wdia B, b: B \vd x\RWD b$}
\RL{\fns $\wdia \dashv \bbox$}
\UIC{$x:: \wdia A \vee \wdia B, b: B \vd b\RBB x$}
\RL{\fns S$ x\bbox a^{c} $ }
\UIC{$x:: \wdia A \vee \wdia B, a\RBB x \Rarr aIy\vd y: B$}
\RL{$\vee_L$}
\BIC{$x:: \wdia A \vee \wdia B,  a\RBB x \Rarr aIy \vd y: A \vee B$}
\RL{\fns S$ x\bbox a $}
\UIC{$x:: \wdia A \vee \wdia B, a: A \vee B \vd a\RBB x$}
\RL{\fns \fns $\wdia \dashv \bbox^{-1}$}
\UIC{$x:: \wdia A \vee \wdia B, a: A \vee B \vd x\RWD a$}
\RL{$\wdia_L$}
\UIC{$x:: \wdia A \vee \wdia B \vd x:: \wdia (A \vee B)$}
\DP
\end{tabular}
\end{center}
 }

{\small
\begin{center}
\begin{tabular}{ccc}
\bottomAlignProof
\AXC{$ $}
\RL{$\bot$}
\UIC{$b R_\wbox x \Rarr b I y,  x:\bot \vd y::\bot$}
\RL{S$ x\bbox a $}
\UIC{$a:\bot, x:\bot \vd a R_\bbox x$}
\RL{\fns $\wdia \dashv \bbox^{-1}$}
\UIC{$a:\bot, x:\bot \vd x R_\wdia a$}
\RL{$\wdia_L$}
\UIC{$x:\bot \vd x: \wdia \bot$}
\DP
 &
\bottomAlignProof
\AXC{$y:: \phi \vd y::\psi$}
\RL{W}
\UIC{$y:: \phi, x::\wdia \phi \vd y::\psi, x R_\wdia a$}
\RL{S$xa$}
\UIC{$a: \psi, x::\wdia \phi  \vd a:\phi, x R_\wdia a $}
\RL{$\wdia_R$}
\UIC{$a: \psi, x::\wdia \phi \vd x R_\wdia a$}
\RL{$\wdia_L$}
\UIC{$x::\wdia \phi \vd x::\wdia \psi$}
\DP
 & 
\bottomAlignProof
\AXC{$x:p, x:q \vd x:p $}
\RL{$\vee_R$}
\UIC{$x:: p \vee q \vd x:p$}
\DP
 \\
\end{tabular}
\end{center}
}

The syntactic completeness for the other axioms and rules of $\mathbf{L}$ can be shown in a similar way. In particular, the admissibility of the substitution rule can be proved by induction in a standard manner.

We now consider the reflexivity axiom $ p \vd \wdia p$ and the transitivity axiom $\wbox p \vd \wbox\wbox p$. The derivation for dual axioms $\wbox p  \vd p$ and $\wdia\wdia p  \vd \wdia p$ can be provided analogously.

{\small
\begin{center}
\begin{tabular}{cc}
\bottomAlignProof
\AXC{$ $}
\LL{Id$_{\,a:p}$}
\UIC{$x:: p, a: p \vd  a:p, a R_\wdia x$}
\LL{$ \wdia_R$}
\UIC{$x:: p, a: p \vd a R_\wdia x$}
\LL{refl}
\UIC{$ x:\wdia p,  a: p \vd a I x $}
\LL{$approx_a$}
\UIC{$ x:\wdia p \vd x:  p$}
\DP 
 & 
\bottomAlignProof
\AXC{$ $}
\LL{Id$_{\,x::p}$}
\UIC{$a:\wbox p, x::p \vd  x::p, a R_\wbox x$}
\LL{\fns $\wbox_L$}
\UIC{$ a:\wbox p, x::p \vd  a R_\wbox x$}
\LL{trans}
\UIC{$ b R_\wbox x \Rarr z J b,  a:\wbox p, x::p \vd  a R_\wbox z$}
\dashedLine
\UIC{$ z (J;R_\wbox) x,  a:\wbox p, x::p \vd  a R_\wbox z$}
\LL{\fns $\wbox \dashv \bdia^{-1}$}
\UIC{$ z (J;R_\wbox) x,  a:\wbox p, x::p \vd  z R_\bdia a$}
\LL{Id$(J;I)_R$}
\UIC{$z (J;R_\wbox) x,  a:\wbox p, x::p \vd  z (J;(I;R_\bdia))a$}
\LL{-S$(J;S)^{ \ast}$} 
\UIC{$ b (I;R_\bdia) a,  a:\wbox p, x::p \vd  b R_\wbox x$}
\dashedLine
\UIC{$ y R_\bdia a \Rarr b I y,  a:\wbox p, x::p \vd  b R_\wbox x$}
\LL{\fns $\wbox_R$}
\UIC{$ y R_\bdia a \Rarr b I y,  a:\wbox p \vd  b:\wbox p$}
\LL{S$a \bdia x$}
\UIC{$x::\wbox p, a:\wbox p \vd x R_\bdia  a$}
\LL{\fns $\bdia \dashv \wbox^{-1}$}
\UIC{$x::\wbox p, a:\wbox p \vd a R_\wbox x$}
\LL{\fns $\wbox_R$}
\UIC{$ a:\wbox p \vd a:\wbox\wbox p$}
\DP \\
 \\
\end{tabular}
\end{center}
 }

Completeness for the  other axiomatic extensions can be shown in a similar way.

\bibliography{ref}
\bibliographystyle{plain}

\end{appendix}

\end{document}